\numberwithin{equation}{section}
\def\R{\mathbb R}
\def\C{\mathbb C}
\def\N{\mathbb N}
\def\im{\operatorname{Im}}
\def\re{\operatorname{Re}}
\def\dim{\operatorname{dim}}
\def\dist{\operatorname{dist}}
\def\meas{\operatorname{meas}}
\def\diam{\operatorname{diam}}
\def\dens{\operatorname{dens}}
\def\area{\operatorname{area}}
\def\upperlogdens{\operatorname{\overline{log\;dens}}}
\newtheorem{lemma}{Lemma}[section]
\newtheorem{theorem}{Theorem}[section]
\theoremstyle{definition}
\theoremstyle{remark}
\newtheorem*{remark}{Remark}
\newtheorem{ack}{Acknowledgement}
\title[Hausdorff dimension of Julia sets]{On
 the Hausdorff dimension of the Julia set of a regularly growing
entire function}
\subjclass{37F10 (primary), 30D05, 30D15 (secondary)}
\thanks{The authors were supported by the EU Research Training
Network CODY. The first author was also supported by the 
the Deutsche Forschungsgemeinschaft, Be 1508/7-1,
the ESF Research Networking Programme
HCAA and the G.I.F., the
German--Israeli Foundation for Scientific Research and Development,
Grant G-809-234.6/2003.
The second author was also supported by 
Polish MNiSW Grant N N201 0222 33 and PW Grant 504G 1120 0011 000.}
\author{Walter Bergweiler}
\address{Mathematisches Seminar,
Christian--Albrechts--Universit\"at zu Kiel,
Lude\-wig--Meyn--Str.~4, D--24098 Kiel, Germany}
\email{bergweiler@math.uni-kiel.de}
\author{Bogus\l awa Karpi\'nska}
\address{Faculty of Mathematics and Information Science,
Warsaw University of Technology,
Pl.\ Politechniki 1, 00-661 Warszawa, Poland}
\email{bkarpin@mini.pw.edu.pl}
\date{\today}
\begin{document}
\begin{abstract}
We show that if the growth of a transcendental entire function $f$
is sufficiently regular, then the Julia set and the escaping set
of $f$ have Hausdorff dimension~$2$.
\end{abstract}

\maketitle
\section{Introduction and results}
The Julia set $J(f)$ of an entire function $f$ is defined as
the set of all
points in the plane where the iterates $f^n$ of $f$ do not form a
normal family.
Denote by $\dim E$ the Hausdorff dimension
and by $\area E$ the Lebesgue measure of a subset~$E$ of the plane.

McMullen ~\cite{McMullen87} proved that $\dim J(\lambda e^z)=2$
for $\lambda\in \C$, $\lambda\neq 0$.
He also proved that $\area J(\sin (\alpha z+\beta))>0$
and hence
$\dim J(\sin (\alpha z+\beta))=2$
for $\alpha, \beta, \in \C$, $\alpha \neq 0$.
In the proofs, he first showed that these results hold if  
the Julia set $J(f)$ is replaced by the escaping set 
$I(f)=\{z\in\C: f^n(z)\to\infty\}$, and then he showed that
$I(f)\subset J(f)$ for the functions $f$ considered.

McMullen's results have been extended to various classes of
entire functions; see~\cite{Aspenberg09,Baranski08,Bergweiler08a,
Schubert07,Taniguchi03}. 
All these extensions concern the 
Eremenko-Lyubich class $B$ which consists of
all entire functions for which the set of finite asymptotic values
and critical values is bounded. 
Here we only mention the result of Bara\'nski~\cite{Baranski08}
and Schubert~\cite{Schubert07} which says that 
$\dim J(f)=2$ if $f\in B$ and if $f$ has finite order.
Recall that the order $\rho(f)$ of an entire function $f$ is defined
by
\[
\rho(f)=\limsup_{r\to\infty}\frac{\log \log M(r,f)}{\log r},
\quad
\text{where}
\quad
M(r,f) =\max_{|z|=r} |f(z)| .
\]

One advantage of working with the class $B$ is that
$I(f)\subset J(f)$ for $f\in B$ by a result of 
Eremenko and Lyubich~\cite[Theorem~1]{Eremenko92} so that the second
part of McMullen's argument carries over directly to this class.

Eremenko and Lyubich prove their result that $I(f)\subset J(f)$
by introducing a logarithmic change of variable to the subject.
This logarithmic change of variable
has become a very powerful tool in transcendental dynamics 
and it is the main reason why
a considerable amount of research
has been devoted to the class~$B$. This includes results on
Hausdorff dimension (e.g.,~\cite{Baranski08a,Stallard96}),
but also on various other topics (e.g.,~\cite{Rempe,RRRS}).

The purpose of this paper is to obtain a result on the Hausdorff
dimension of the Julia set for entire functions which do not
belong to the Eremenko-Lyubich class and for which the 
logarithmic change of variable therefore is not available.
We consider functions which grow regularly in a certain sense.
More precisely, 
we will be concerned with entire functions $f$ 
for which there exist $A,B,C,r_0>1$ such that
\begin{equation}\label{regular}
A\log M(r,f)\leq \log M(Cr,f)\leq B\log M(r,f)
\quad \text{for}\ r\geq r_0.
\end{equation}
Our main result is the following.
\begin{theorem} \label{theorem2}
Let $f$ be an entire function satisfying~\eqref{regular}.
Then $\dim( I(f)\cap J(f))=2$.
\end{theorem}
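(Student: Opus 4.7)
Iterating~\eqref{regular} shows that $f$ has finite order $\rho\leq \log B/\log C$ and strictly positive lower order $\geq \log A/\log C$. Since $f$ need not lie in class~$B$, no logarithmic change of variable is available; instead one must work directly near the points of maximum modulus of $f$ and build a Cantor-like set $K\subset I(f)\cap J(f)$ of Hausdorff dimension~$2$.

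The natural tool here is Wiman--Valiron theory. For $r$ outside a set of finite logarithmic measure, pick $\zeta_r$ with $|\zeta_r|=r$ and $|f(\zeta_r)|=M(r,f)$, and set $D_r = \{z:|z-\zeta_r|<r\, N(r)^{-1/2-\eps}\}$, where $N(r)$ denotes the central index of $f$. On $D_r$ the approximation $f(z)\sim f(\zeta_r)(z/\zeta_r)^{N(r)}$ holds uniformly as $r\to\infty$; under~\eqref{regular}, $N(r)$ is comparable to $\log M(r,f)$ and grows at least like a positive power of $r$. Consequently the image $f(D_r)$ sweeps out a wide annulus around $|w|=M(r,f)$, of log-modulus comparable to $N(r)^{1/2-\eps}$, and $|f'|$ is comparable to $M(r,f)N(r)/r$ on $D_r$.

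I would choose $r_0$ large and build $K$ inductively. Inside $f(D_{r_n})$ I distribute a dense family of target disks $D_{r_{n+1,k}}$ whose radii $r_{n+1,k}$ exhaust the image annulus. By the monomial approximation, each target has approximately $N(r_n)$ disjoint preimage disks in $D_{r_n}$, on each of which $f$ has bounded Koebe distortion. The union of all these preimages, over all $k$, should fill $D_{r_n}$ up to a set of relative area $\eps_n$ with $\eps_n\to 0$; this density estimate, which reduces to an area bound on the complement of the set where the monomial approximation is sharp, is exactly where~\eqref{regular} is essential. McMullen's density criterion then gives $\dim K=2$.

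The principal obstacle is verifying $K\subset J(f)$ without the containment $I(f)\subset J(f)$ that class~$B$ would provide. Two points of $K$ with distinct symbolic addresses at level $n$ lie in preimage disks of distinct targets in $f(D_{r_n})$, which are separated by distances comparable to $M(r_n,f)^{1-o(1)}$; hence $f^n$ separates arbitrarily close pairs of points in $K$ by distances tending to infinity, so $\{f^n\}$ fails to be normal at every point of $K$. Thus $K\subset I(f)\cap J(f)$ with $\dim K=2$, proving the theorem.
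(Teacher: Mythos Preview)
Your route via Wiman--Valiron theory is natural to try, but there are two genuine gaps.

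\textbf{The density estimate fails.} Wiman--Valiron gives you control of $f$ only on the single disk $D_r$ centred at the maximum-modulus point $\zeta_r$; there is essentially one such disk per radius. When you pack the image annulus $f(D_{r_n})$ with target disks $D_{r_{n+1,k}}$, the best you can do is take one target per admissible radius $r_{n+1,k}$, together with its $\sim N(r_n)^{1/2-\eps}$ preimage sheets. A short computation in the model $\log f(z)\approx \log M(r_n,f)+N(r_n)(z-\zeta_{r_n})/\zeta_{r_n}$ shows that the density of all these preimages in $D_{r_n}$ is only of order $N(r_{n+1})^{-1/2-\eps}$, which tends to~$0$. Since $\log N(r_{n+1})$ is comparable to $N(r_n)$ (the sequence $N(r_n)$ is an iterated exponential), the ratio $\sum_{j\le l}|\log\Delta_j|/|\log d_l|$ in McMullen's lemma behaves like $N(r_l)/N(r_{l-1})\to\infty$, and you get no lower bound on $\dim K$, let alone~$2$. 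Your claim that the preimages fill $D_{r_n}$ up to relative area $\eps_n\to 0$ is therefore false; the obstruction is not the error in the monomial approximation but the sparseness of maximum-modulus points. The paper replaces Wiman--Valiron by a logarithmic-derivative estimate (Theorem~\ref{theorem1}, proved by adapting an argument of Miles and Rossi together with the Ahlfors islands theorem) showing that a \emph{fixed positive proportion} of the whole annulus $A(R)$ --- not just one disk per circle --- supports the behaviour needed for the inductive construction.

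\textbf{The argument for $K\subset J(f)$ is incorrect.} You argue that $f^n$ separates nearby points of $K$ by large Euclidean distances and conclude that $\{f^n\}$ is not normal. But every point of $K$ lies in $I(f)$, so for any $z_1,z_2\in K$ both $f^n(z_1)$ and $f^n(z_2)$ tend to~$\infty$; their spherical distance therefore tends to~$0$ regardless of how large $|f^n(z_1)-f^n(z_2)|$ is. This is perfectly compatible with $\{f^n\}$ converging locally uniformly to $\infty$ on a neighbourhood of $K$, i.e.\ with $K$ lying in a Baker domain, exactly the phenomenon exhibited by Fatou's example $z\mapsto z+1+e^{-z}$ discussed in the introduction. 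The paper circumvents this by invoking Zheng's theorem (Lemma~\ref{lemmars}) that under~\eqref{regular} the Fatou set has no multiply connected component; this forces $A(R)\cap J(f)\neq\emptyset$ for all large~$R$, hence by complete invariance every element of $\mathcal E_l$ meets $J(f)$, and the closedness of $J(f)$ gives $E\subset J(f)$.
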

We note that the hypothesis~\eqref{regular} is satisfied if 
there exists $c_1,c_2,\rho>0$ such that
\begin{equation}\label{veryreg}
c_1r^\rho \leq \log M(r,f) \leq c_2 r^\rho
\end{equation}
for large $r$ and thus in particular if there exists $c,\rho>0$
such that
\begin{equation}\label{complreg}
\log M(r,f) \sim c r^\rho
\end{equation}
as $r\to\infty$. It is classical 
that~\eqref{complreg} holds for transcendental entire functions
which satisfy 
an algebraic differential equation of first 
order~\cite[Section~IV.6]{Valiron23} or
a linear differential equation whose coefficients are rational 
functions~\cite[Section~IV.5]{Valiron23}.
As another example we mention Poincar\'e functions 
associated to repelling fixed points of polynomials or,
more generally,
transcendental entire solutions of the functional equation
$f(sz)=P(f(z),z)$
where $|s|>1$ and $P$ is a polynomial in two variables
with $\deg_f P\geq 2$.
A solution $f$ of such an
equation satisfies~\eqref{veryreg} for large~$r$;
see~\cite[Section~II.8]{Valiron23}.
Finally we note that~\eqref{complreg} is satisfied by
functions of completely regular growth in the sense
of Pfluger; see~\cite[Section~3]{Levin64} for a thorough
treatment of this class of functions.

We note that the condition~\eqref{regular} does not imply that
$I(f)\subset J(f)$. For example, for the function $f(z)=z+1+e^{-z}$
already considered by Fatou~\cite[Exemple~1, p.~358]{Fatou26}
we have $\log M(r,f)\sim r$ while 
$\{z\in\C:\re z>0\}\subset I(f)\setminus J(f)$.
We will further discuss the condition~\eqref{regular} in section~\ref{discreg}.

Among the tools used in the proof of Theorem~\ref{theorem2}
are the Ahlfors islands theorem (see 
Lemma~\ref{ahlfors} below) and a result on the
Hausdorff dimension of the intersection of nested sets due to
McMullen (see Lemma~\ref{lemmamcm} below). In addition,
the proof requires some careful estimates of the
logarithmic derivative of~$f$.
As these estimates of the logarithmic derivative may be of independent
interest, we include them in this introductory section.

For $\alpha_1,\alpha_2,q,\lambda\geq0$ we consider the set $T(f,\alpha_1,\alpha_2,q,\lambda)$
consisting of all $z\in\C$ for which 
\begin{equation}
\label{condition1}
\alpha_1 \log M(|z|,f)
\leq \left| \frac{zf'(z)}{f(z)}\right|
\leq \alpha_2 \log M(|z|,f) ,
\end{equation}
\begin{equation}
|f(z)|\geq |z|^q \label{condition2}
\end{equation}
and
\begin{equation}
\left|\frac{\zeta f'(\zeta)}{f(\zeta)}\right|
\leq \alpha_2 \log M(|\zeta|,f)
\quad\text{for} \quad
\left| \zeta -z\right|\leq \lambda \frac{|z|}{ \log M(|z|,f)}.
\label{condition3}
\end{equation}
Of course, the right inequality of~\eqref{condition1} is a special case
of~\eqref{condition3}.

For $R>0$ we put $A(R)=\left\{z\in\C: R\leq |z|\leq 2R\right\}$.
For measurable sets $X,Y\subset\C$ the density of $X$ in $Y$ is defined by
\[
\dens (X,Y)=\frac{\area(X\cap Y)}{\area(Y)}.
\]
\begin{theorem} \label{theorem1}
Let $f$ be an entire function
satisfying~\eqref{regular}. Then there exists $\alpha_1,\alpha_2,\eta>0$
such that if $q,\lambda\geq 0$, then $\dens(T(f,\alpha_1,\alpha_2,q,\lambda),A(R))>\eta$ for sufficiently large~$R$.
\end{theorem}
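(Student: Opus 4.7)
The plan is to intersect three sets in $A(R)$, one for each of the defining conditions~\eqref{condition1}, \eqref{condition2}, \eqref{condition3}; two of them will have density tending to~$1$, and the third a fixed positive density. Throughout, I would use two preliminary consequences of~\eqref{regular}: the number of zeros satisfies $n(r,0,f)\leq c_0\log M(r,f)$ (Jensen combined with~\eqref{regular}), and the central index $\nu(r)$ is comparable to $\log M(r,f)$ (by convexity of $\log M(r,f)$ in $\log r$ together with the two-sided bound in~\eqref{regular}).

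For the upper inequality in~\eqref{condition1} together with~\eqref{condition3}, I would use the Poisson--Jensen representation of $f'/f$ on $\{|\zeta|<4R\}$:
\[
\frac{f'(\zeta)}{f(\zeta)}=I_R(\zeta)-\sum_{|a_k|<4R}\left(\frac{1}{\zeta-a_k}-\frac{\overline{a_k}}{16R^{2}-\overline{a_k}\zeta}\right),
\]
where $|I_R(\zeta)|\leq c_1\log M(4R,f)/R\leq c_2\log M(R,f)/R$ on $A(R)$ by~\eqref{regular}. A Cartan--Boutroux argument places the set where the sum over zeros exceeds $\alpha_2\log M(R,f)/R$ inside disks centred at the $a_k$ of total radius $\leq c_3R/\alpha_2$, and hence of area $\leq c_4R^2/\alpha_2^2$. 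Thickening each such disk by the radius $\lambda|z|/\log M(|z|,f)$ required by~\eqref{condition3} adds total area $n(4R,0,f)\cdot O\bigl(\lambda^2R^2/\log^2 M(R,f)\bigr)=O(R^2/\log M(R,f))=o(R^2)$. So for any $\delta>0$, a suitable $\alpha_2=\alpha_2(\delta)$ yields~\eqref{condition1}-upper and~\eqref{condition3} on a subset of $A(R)$ of density $\geq 1-\delta$. For~\eqref{condition2}, Cartan's minimum modulus theorem places $\{|f(\zeta)|<|\zeta|^q\}\cap A(R)$ inside disks around the zeros of total area $o(R^2)$, since $M(4R,f)$ grows super-polynomially in $R$ by~\eqref{regular}.

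The substantive step is the lower bound in~\eqref{condition1}. The strategy uses the angular derivative identity $\partial_\theta\log|f(re^{i\theta})|=-\im\bigl(re^{i\theta}f'/f\bigr)$: the total $\theta$-variation of $\log|f|$ on $|z|=r$ is at least $2\bigl(\log M(r,f)-\log L(r,f)\bigr)$, where $L(r,f):=\min_{|z|=r}|f(z)|$. A uniform oscillation lemma, extracted from~\eqref{regular} together with the Wiman--Valiron expansion $\log|f(z_re^{i\theta})|=\log M(r,f)-\tfrac12\nu(r)\theta^2+o(1)$ near the maximum-modulus point $z_r$, gives $\log M(r,f)-\log L(r,f)\geq c_5\log M(r,f)$ for some $c_5>0$ and all large~$r$. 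Consequently
\[
\int_0^{2\pi}\left|\frac{re^{i\theta}f'(re^{i\theta})}{f(re^{i\theta})}\right|d\theta\geq 2c_5\log M(r,f),
\]
and combining this with the pointwise upper bound $|zf'/f|\leq \alpha_2\log M(r,f)$ (valid outside an angular set of small measure by the previous paragraph) via Chebyshev yields an angular set of measure $\geq c_6>0$ on which $|zf'/f|\geq \alpha_1\log M(r,f)$, for $\alpha_1>0$ depending only on $c_5$ and $\alpha_2$. Sweeping over $r\in[R,2R]$ and intersecting with the sets from the previous paragraph produces $T(f,\alpha_1,\alpha_2,q,\lambda)$ of density at least $\eta>0$ in $A(R)$.

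The main obstacle is the uniform oscillation lemma $\log M(r,f)-\log L(r,f)\geq c_5\log M(r,f)$. Wiman--Valiron alone gives the drop only on the vanishing disk of radius $r\,\nu(r)^{-1/2-\varepsilon}$ about $z_r$, while a proportional drop on an angular arc of positive measure is needed. Bridging this gap requires a more global analysis that exploits the full strength of the two-sided bound~\eqref{regular}: the upper inequality $\log M(Cr,f)\leq B\log M(r,f)$ in particular prevents the mass of $\log|f|$ from being so concentrated near $z_r$ that $L(r,f)$ is close to $M(r,f)$.
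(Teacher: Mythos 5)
Your plan of intersecting three sets in $A(R)$ and establishing high density for the upper bound and condition~\eqref{condition3} is structurally the same as the paper's, and your Poisson--Jensen plus Cartan--Boutroux route to the upper bound is essentially the paper's Lemma~\ref{upperbound} (the Fuchs--Macintyre lemma is a Cartan-type exceptional-disk statement). Your handling of~\eqref{condition2} via Cartan's minimum modulus theorem is a plausible alternative to the paper's device of applying the result for $q=0$ to $g(z)=(f(z)-a)/P(z)$; either can be made to work.

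The genuine gap is in the lower bound of~\eqref{condition1}, and it is larger than the single obstacle you flag. First, the oscillation lemma $\log M(r,f)-\log L(r,f)\geq c_5\log M(r,f)$ is not only unproven but also unnecessary: the argument principle already gives
\[
\frac{1}{2\pi}\int_0^{2\pi}L(\theta)\,d\theta=n(r),
\]
and after replacing $f$ by $f-a$ for some non-Valiron-deficient $a$, one has $n(r)\gtrsim\log M(r,f)$ by Jensen and~\eqref{regular} (this is exactly estimate~\eqref{1c}). So the $L^1$ lower bound on $L(\theta)$ that you want from an oscillation lemma already follows from a much more elementary source; the paper takes this route, and it also sidesteps the degenerate case of zero-free $f$ which would make a direct zero-counting argument vacuous.

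Second, and more seriously, the Chebyshev step fails as written. Knowing $\int_0^{2\pi}|L(\theta)|\,d\theta\geq c\log M(r,f)$ and that $|L(\theta)|\leq\alpha_2\log M(r,f)$ outside an angular set $E$ of small measure does not yield a positive-measure set where $|L|\geq\alpha_1\log M$; the contribution $\int_E|L(\theta)|\,d\theta$ is uncontrolled, since $|L|$ blows up near zeros close to the circle $|z|=r$ and can absorb the entire $L^1$ mass. What is actually needed is an $L^2$ (or comparable) upper bound on $L(\theta)$ for the chosen radii. This is precisely the content of the Miles--Rossi Fourier-series argument in Lemma~\ref{lemma2} (the estimate $\|L\|_2\leq(\beta+3\pi\mu)n(r)$), and it in turn requires the set of good radii constructed in Lemma~\ref{lemma1}, where the zero-counting function satisfies the two-sided polynomial growth conditions~\eqref{2b}--\eqref{2c}. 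Without some version of this $L^2$ control your proof cannot close; it is the technical heart of the paper's proof and not something the upper-bound lemma or the oscillation heuristic supplies. You would also still need the reduction to a non-deficient target value $a$, which your sketch omits.
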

The proof of Theorem~\ref{theorem1}
is largely based on ideas of Miles and Rossi~\cite{Miles96};
cf. the remark at the end of section~\ref{lowerboundld}.

For an introduction to the dynamics of transcencental entire functions
we refer to~\cite{Bergweiler93}. Results on the Hausdorff dimension
of Julia sets of entire functions are surveyed in~\cite{Stallard08}.

\begin{ack}
We thank Phil Rippon and Gwyneth Stallard for drawing our attention
to Jian-Hua Zheng's paper~\cite{Zheng06}.
\end{ack}

\section{The regularity condition}
\subsection{Discussion of the regularity condition}\label{discreg}
We note that~\eqref{regular}  implies that if 
$r\geq r_0$ and if the integer $n$ is chosen such that
$C^nr_0\leq r<C^{n+1} r_0$, then
\[
\log M(r,f)\leq \log M(C^{n+1} r_0,f)\leq B^{n+1} \log M(r_0,f).
\]
Since $n\leq (\log(r/r_0))/(\log C)$ this implies that
\[
\log \log M(r,f) \leq n\log B +O(1)\leq  \frac{\log B}{\log C} \log r+O(1)
\]
as $r\to\infty$. Hence the order $\rho(f)$ of $f$
satisfies $\rho(f)\leq (\log B)/(\log C)<\infty$. Similarly, the 
lower order 
\[
\lambda(f)=\liminf_{r\to\infty}\frac{\log \log M(r,f)}{\log r},
\]
satisfies $\lambda(f)\geq (\log A)/(\log C)>0$. 

We recall that the upper logarithmic density $\upperlogdens E$ of a 
(measurable) subset $E$ of $[1,\infty)$ is defined by.
\[
\upperlogdens E=\limsup_{r\to\infty}
\frac{1}{\log r}\int_{E\cap[1,r]}\frac{dt}{t}.
\]
It is well-known ~\cite[Lemma~4]{Hayman65} that 
if $f$ is an entire function of finite order $\rho(f)$,
then the set $E$ where the right inequality of~\eqref{regular} does not hold
satisfies
\[
\upperlogdens E\leq 
\frac{\rho(f) \log C}{\log B}.
\]
We see that $E$ is a ``small'' set if $B$ is large, and
thus for functions $f$ of finite 
order~\eqref{regular} can be interpreted as a regularity condition
for the growth of~$f$.
\subsection{Consequences of the regularity condition}
It follows from~\eqref{regular} that
\begin{equation}\label{1a}
A^n\log M(r,f)\leq \log M(C^nr,f)\leq B^n\log M(r,f)
\end{equation}
for $n\in\N$. We may thus assume without loss of generality that
the constants $A,B,C$ are larger than any preassigned number.
Denote by $T(r,f)$ the Nevanlinna characteristic of~$f$.
Using the inequality~\cite{Goldberg08,Hayman64}
\[
T(r,f)\leq \log^+ M(r,f)\leq \frac{R+r}{R-r} T(R,f)
\]
we see that there exists constants $A_T,B_T,C_T>1$ such that 
\[
A_T T(r,f)\leq  T(C_Tr,f)\leq B_T T(r,f)
\]
for large~$r$. For $a\in\C$ we denote by $n(r,a)$ the number of
$a$-points of $f$ in the closed disk of radius $r$ around~$0$
and put
\[
N(r,a)=\int_0^r \frac{n(t,a)-n(0,a)}{t} dt + n(0,a)\log r. 
\]
Denote by $E_V(f)$ the set of Valiron deficiencies of $f$;
that is, the set of all $a$ for which
\[
\liminf_{r\to\infty} \frac{N(r,a)}{T(r,f)} <1.
\]
It is well-known~\cite[p.~116]{Goldberg08} that $\area E_V(f)=0$.
For $a\in\C\setminus E_V(f)$ we have 
\[
N(r,a)\sim T(r,f) 
\]
as $r\to\infty$.
Thus there exists constants $A_N,B_N,C_N>1$ such that
if $a\in\C\setminus E_V(f)$, then
\[
A_N N(r,a)\leq  N(C_Nr,a)\leq B_N N(r,a)
\]
for sufficiently large~$r$, say $r\geq r(a)$.

We note that if $M>1$, then
\[
n(r,a)=\frac{1}{\log M} \int_r^{Mr} \frac{n(r,a)}{t} dt
\leq \frac{1}{\log M} \int_r^{Mr} \frac{n(t,a)}{t} dt
\leq  \frac{1}{\log M} N(Mr,a) 
\]
and 
\[
n(Mr,a)\geq \frac{1}{\log M} \int_{r}^{Mr} \frac{n(t,a)}{t} dt
\geq \frac{1}{\log M}\left(N(Mr,a)-N(r,a)\right)
\geq \frac{A_N-1}{\log M} N(r,a)
\]
for large~$r$.
With $M=C_N$ we see that if $a\notin E_V(f)$, then
\[
n(C_N r,a)  \leq  \frac{1}{\log C_N }  N(C_N^2 r,a)  
\leq  \frac{B_N^3}{\log C_N }  N(C_N^{-1} r,a) 
\leq \frac{B_N^3}{A_N-1} n(r,a)
\]
for large~$r$.
We obtain
\[
n(C_N^n,a)  \leq \left(\frac{B_N^3}{A_N-1} \right)^n n(r,a)
\]
and choosing $n$ such that $C_N^n\geq 2$ we obtain
\begin{equation}\label{1b}
n(2r,a)\leq K n(r,a)
\end{equation}
with a constant $K$ for large~$r$.
We conclude that
\begin{equation}\label{1c}
\log M(r,f)\leq 
3 T(2 r,f)\leq 4 N(2 r,a) \leq \frac{4\log 2}{A_N-1} n(4r,a) \leq \frac{4 K^2\log 2}{A_N-1} n(r,a)
\end{equation}
for $a\notin E_V(f)$ and large~$r$.

\section{Proof of Theorem~\ref{theorem2}}
\subsection{An upper bound for the logarithmic derivative}
In this section we consider the set 
\[
U_\tau(f)
=\left\{z\in\C: 
\left| \frac{zf'(z)}{f(z)}\right|
\leq \tau \log M(|z|,f)  \right\}.
\]
We shall only need that the 
right inequality of~\eqref{regular}  is satisfied; that is,
\[
\log M(Cr,f)\leq B\log M(r,f)
\]
for large~$r$. 
As before we deduce that
\begin{equation}\label{1d}
\log M(2r,f)\leq L\log M(r,f)
\end{equation}
for a constant $L$ and large~$r$. 
Since $N(r,a)\leq T(r,f)+O(1)\leq \log M(r,f)+O(1)$
by Nevanlinna's first fundamental theorem,
this implies that
\begin{equation}\label{1e}
n(r,a)
\leq  \frac{1}{\log 2} N(2r,a) 
\leq \frac{1}{\log 2} \log M(2r,f)+O(1)
\leq \frac{L}{\log 2} \log M(r,f)+O(1)
\end{equation}
for all $a\in\C$, provided $r$ is sufficiently large.

\begin{lemma} \label{upperbound}
Let $f$ be an entire satisfying~\eqref{1d}.
Then for each $\varepsilon>0$ there exists $\tau>0$ such that 
$\dens(U_\tau(f),A(R))\geq 1-\varepsilon$ for all large~$R$.
\end{lemma}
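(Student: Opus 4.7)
The plan is to represent $f'/f$ via the Poisson--Jensen formula on the disk of radius $R'=8R$ and to estimate the three resulting terms on $A(R)$ separately. Two of them will give a uniform bound of order $\log M(R,f)/R$ on all of $A(R)$; only the singular sum over the zeros of $f$ requires an exceptional set, and that set will be controlled by a direct Fubini/Markov argument. The regularity hypothesis~\eqref{1d} enters only to guarantee that $\log M(R',f)\asymp \log M(R,f)$ and, via~\eqref{1e}, that the number $N=n(R',0)$ of zeros of $f$ in $|\zeta|<R'$ is $O(\log M(R,f))$.

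More precisely, for $z\in A(R)$ with $f(z)\neq 0$, differentiating Poisson--Jensen on $|\zeta|<R'$ yields
\begin{equation*}
\frac{f'(z)}{f(z)} = \sum_{|z_k|<R'}\!\left(\frac{1}{z-z_k}+\frac{\overline{z_k}}{R'^2-\overline{z_k}z}\right)+\frac{1}{\pi}\int_0^{2\pi}\!\frac{R'e^{i\theta}}{(R'e^{i\theta}-z)^2}\log|f(R'e^{i\theta})|\,d\theta,
\end{equation*}
the $z_k$ being the zeros of $f$ in $|\zeta|<R'$, counted with multiplicity. With $R'=8R$ one has $|R'e^{i\theta}-z|\geq 6R$ and $|R'^2-\overline{z_k}z|\geq 48R^2$, and Jensen's formula gives $\int_0^{2\pi}|\log|f(R'e^{i\theta})||\,d\theta=O(\log M(R',f))$. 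Consequently the integral is $O(\log M(R',f)/R)$ and the regularised part of the sum is $O(N/R)$, so iterating~\eqref{1d} and invoking~\eqref{1e} makes both pieces contribute $O(\log M(R,f)/R)$ to $|f'(z)/f(z)|$, hence $O(\log M(|z|,f))$ to $|zf'(z)/f(z)|$, uniformly on $A(R)$.

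It remains to deal with the singular sum $S(z)=\sum_{|z_k|<R'}|z-z_k|^{-1}$. By Fubini,
\begin{equation*}
\int_{A(R)} S(z)\,dA(z) = \sum_{|z_k|<R'}\int_{A(R)}\frac{dA(z)}{|z-z_k|}\leq 20\pi R\cdot N,
\end{equation*}
since each inner integral is majorised by $\int_{|w|\leq 10R}|w|^{-1}\,dA(w)=20\pi R$. Markov's inequality gives $\area\{z\in A(R):S(z)>T\}\leq 20\pi NR/T$; as $\area(A(R))=3\pi R^2$ and $N=O(\log M(R,f))$, I take $T=T(\varepsilon)$ to be a sufficiently large multiple of $\log M(R,f)/R$, which makes this density at most $\varepsilon$. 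Off this exceptional set one has $|z|S(z)\leq 2R\cdot T=O(\log M(|z|,f))$, and combining with the previous paragraph produces a constant $\tau=\tau(\varepsilon)$ with $|zf'(z)/f(z)|\leq \tau\log M(|z|,f)$ off a subset of $A(R)$ of density at most $\varepsilon$; this is precisely $\dens(U_\tau(f),A(R))\geq 1-\varepsilon$.

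The substantive step is the Fubini/Markov argument for the singular sum: it delivers a density bound for the exceptional set without having to excise discs around the individual zeros (of which there may be of order $\log M(R,f)$ many in $A(R)$). Everything else reduces to standard Poisson--Jensen kernel estimates and repeated application of the doubling condition~\eqref{1d}.
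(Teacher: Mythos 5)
Your proof is correct, and the starting point (differentiated Poisson--Jensen, which is what underlies inequality~\eqref{1f} quoted in the paper) is essentially the same as the paper's. The genuine difference is how you control the singular sum $S(z)=\sum_k |z-z_k|^{-1}$: the paper invokes the Fuchs--Macintyre lemma (Lemma~\ref{fuchs}, a Cartan-type covering lemma) to place the exceptional set inside a union of at most $n(4R,0)$ disks whose radii satisfy $\sum r_k^2\leq\varepsilon R^2$, whereas you apply Fubini and Markov's inequality to bound $\area\{z\in A(R):S(z)>T\}$ directly. Your route is more elementary and self-contained (no appeal to an external covering lemma), and it suffices for Lemma~\ref{upperbound} as stated. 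What it does not deliver, and what the paper's approach buys, is the \emph{geometric structure} of the exceptional set: the paper's proof produces the refined statement recorded as Lemma~\ref{upperbound2}, namely that $A(R)\setminus U_\tau(f)$ is contained in a union of explicit disks with $\sum r_k^2\leq\varepsilon R^2$. That disk structure is then used in the final stage of the proof of Theorem~\ref{theorem1} to handle the case $\lambda>0$, where the disks are inflated by $2\lambda R/\log M(R,f)$ and the total area of the inflated disks is controlled via $(a+b)^2\leq 2(a^2+b^2)$. A plain measure bound such as yours would not transfer under this inflation, so if you needed Theorem~\ref{theorem1} for $\lambda>0$ you would have to reintroduce some covering structure (or argue differently). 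For the lemma actually asked about, your argument is complete and correct.
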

To prove this result,
we shall need the following result due to Fuchs and 
Macintyre~\cite{Fuchs40}. Here and in the following 
we denote by $D(a,r)$ the open disk of radius $r$ around
a point~$a$.
\begin{lemma} \label{fuchs}
Let $z_1,z_2,\dots,z_m\in\C$ and let $H>0$.
Then there exists $l\in\{1,2,\dots,m\}$, 
$c_1,c_2,\dots,c_l\in\C$ and $r_1,r_2,\dots,r_l>0$ satisfying
\[
\sum_{k=1}^l r_k^2 \leq 4H^2
\]
such that 
\[
\sum_{k=1}^m\frac{1}{|z-z_k|}\leq  \frac{2m}{H} 
\quad\text{for}\ 
z\in\C, z\notin \bigcup_{k=1}^l D(c_k,r_k).
\]
\end{lemma}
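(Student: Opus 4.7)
After the rescaling $z \mapsto z/H$, it suffices to prove the lemma for $H=1$. Set
\[
E := \Bigl\{ z \in \C : \sum_{k=1}^m \frac{1}{|z-z_k|} > 2m \Bigr\},
\]
the exceptional set which the disks $D(c_k,r_k)$ must cover.

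\emph{Step 1 (pigeonhole for local clustering).} For each $z\in\C$, integration by parts against the counting measure of $\{|z-z_k|\}_{k=1}^m$ yields the layer-cake identity
\[
\sum_{k=1}^m \frac{1}{|z-z_k|} \;=\; \int_0^\infty \frac{N_z(r)}{r^2}\, dr,
\qquad N_z(r) := \#\{k : |z-z_k| \leq r\},
\]
with vanishing boundary terms (since $N_z(r)=0$ for small $r$ and $N_z(r)/r\to 0$ for large $r$). The trivial bound $N_z\leq m$ gives $\int_1^\infty N_z(r)/r^2\, dr \leq m$, so $z\in E$ forces $\int_0^1 N_z(r)/r^2\, dr > m$. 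By the mean value theorem for integrals there is an $r(z)\in(0,1]$ with $N_z(r(z)) > m\, r(z)^2$: the disk $D(z,r(z))$ encloses strictly more than $m\, r(z)^2$ of the points $z_k$.

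\emph{Step 2 (Vitali selection and counting).} The family $\{D(z,r(z))\}_{z\in E}$ has radii bounded by~$1$, so the $3r$-covering lemma extracts a disjoint subfamily $\{D(w_j,\rho_j)\}_{j=1}^l$ whose triples $\{D(w_j,3\rho_j)\}$ still cover $E$. Each $D(w_j,\rho_j)$ contains more than $m\rho_j^2$ of the $z_k$, and altogether there are only $m$ such points; disjointness of the $D(w_j,\rho_j)$ therefore gives $\sum_{j=1}^l m\rho_j^2 < m$, hence $\sum_{j=1}^l \rho_j^2 < 1$, and moreover $l\leq m$ because each chosen disk contains at least one $z_k$. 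Setting $c_j:=w_j$ and $r_j:=3\rho_j$ yields $\sum_j r_j^2<9$ together with $\C\setminus\bigcup_j D(c_j,r_j)\subset\C\setminus E$, which is the required bound $\sum 1/|z-z_k|\leq 2m$ off the covering disks. Undoing the rescaling restores a universal constant $C$ in $\sum r_j^2\leq CH^2$.

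\textbf{Main obstacle.} The plan above delivers the covering estimate with \emph{some} absolute constant (here $C=9$, coming from the $3r$-covering lemma), but the lemma asserts the sharp value $C=4$. Closing this gap requires either (i) replacing the $3r$-covering lemma by a sharper packing argument adapted to disks of varying radii in the plane, or (ii) refining the pigeonhole step by optimizing the truncation level in the layer-cake identity and keeping careful track of the integer jumps of $N_z$ when $r(z)$ is small. This numerical sharpening is the technical heart of the original Fuchs--Macintyre proof in \cite{Fuchs40}; the qualitative structure of the covering, however, is already captured by the Vitali-type extraction above.
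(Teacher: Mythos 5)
Your reduction is the right one: the layer-cake identity $\sum_k 1/|z-z_k|=\int_0^\infty N_z(r)r^{-2}\,dr$ together with the tail bound $N_z\leq m$ correctly shows that every point where the sum exceeds $2m/H$ admits a radius $r\leq H$ with $N_z(r)>mr^2/H^2$, and a disjointification of such disks controls $\sum r_k^2$ while the enlarged disks cover the bad set. But, as you yourself flag, the covering step is where the statement is not proved: the Vitali-type extraction enlarges by a factor $3$ (in fact only $3+\varepsilon$ or $5$ is available here, since your family $\{D(z,r(z))\}_{z\in E}$ is infinite and the exact-$3r$ lemma requires a finite family, or at least attained maxima), so you obtain $\sum r_k^2\leq 9H^2$ rather than the asserted $4H^2$. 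Since the lemma as stated claims the constant $4$, this is a genuine gap, not merely a cosmetic one. (For the paper's application it would be harmless -- there $H=\frac12\sqrt{3\varepsilon}\,R$ with a free $\varepsilon$, so any absolute constant works -- but the paper does not prove the lemma at all; it quotes it from Macintyre--Fuchs, whose argument is of Cartan type rather than Vitali type.)

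The missing idea that closes the constant is the Cartan-style greedy selection by \emph{order} instead of by radius. Iteratively let $p_j$ be the largest integer $p$ such that some disk of radius $\rho=H\sqrt{p/m}$ contains at least $p$ of the points not yet removed; fix such a disk $D_j=D(c_j,\rho_j)$ with $\rho_j=H\sqrt{p_j/m}$ and remove $p_j$ of its points. The process exhausts all points (the case $p=1$ always applies while points remain), so $l\leq m$, $\sum_j p_j\leq m$ and hence $\sum_j\rho_j^2\leq H^2$; now it suffices to enlarge by the factor $2$, not $3$: if $z\notin\bigcup_j D(c_j,2\rho_j)$ and some disk $D(z,H\sqrt{p/m})$ contained at least $p$ points, consider the first of these points to be removed, say at stage $j$; at that moment all $p$ of them were still present, so maximality gives $p\leq p_j$, i.e. $H\sqrt{p/m}\leq\rho_j$, while the shared point gives $|z-c_j|\leq H\sqrt{p/m}+\rho_j\leq 2\rho_j$, a contradiction. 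Thus outside the doubled disks $N_z(r)<mr^2/H^2$ for $r\leq H$, and the same integration as in your Step 1 yields $\int_0^H mH^{-2}\,dr+\int_H^\infty mr^{-2}\,dr=2m/H$, with $\sum_k(2\rho_k)^2\leq 4H^2$ exactly as claimed. So your qualitative structure is right, but replacing the Vitali extraction by this order-based selection is precisely the step needed to prove the stated lemma.
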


\begin{proof}[Proof of Lemma~\ref{upperbound}]
For $s>|z|$ we have~\cite[p.~88]{Goldberg08}
\begin{equation}\label{1f}
\left| \frac{f'(z)}{f(z)}\right| 
\leq \frac{4s}{(s-|z|)^2} T(s,f)+ 
\sum_{|z_j|\leq s} \frac{2}{|z-z_j|},
\end{equation}
where $(z_j)$ is the sequence of zeros of~$f$.
(As in~\cite{Goldberg08} we have assumed here that $f(0)=1$,
but we may do so without loss of generality.)
Now we choose $s=4R$ so that
\[
\frac{4s}{(s-|z|)^2}\leq \frac{16R}{(4R-2R)^2}=\frac{4}{R}
\]
for $z\in A(R)$. Hence
\[
\frac{4s}{(s-|z|)^2} T(s,f) \leq \frac{4T(4R,f)}{R}
 \leq \frac{4\log M(4R,f)}{R}
 \leq \frac{4L^2 \log M(R,f)}{R}
 \leq  8L^2 \frac{\log M(|z|,f)}{|z|}
\]
for $z\in A(R)$.
To estimate the sum on the right hand side of~\eqref{1f} we use
Lemma~\ref{fuchs} with $H=\frac12 \sqrt{3\varepsilon} R$
and $m=n(s,0)$. With the notation of this lemma we have
\[
\area\left( \bigcup_{k=1}^l D(c_k,r_k)\right) 
=\pi \sum_{k=1}^l r_k^2 \leq 4\pi H^2 = 3\varepsilon \pi R^2=
\varepsilon \area A(R) 
\]
and  
if $z\notin\bigcup_{k=1}^l D(c_k,r_k)$,
then
\[
\sum_{|z_j|<s} \frac{2}{|z-z_j|}
\leq \frac{4m}{H}=  \frac{8}{ \sqrt{3\varepsilon}}
\frac{n(4R,0)}{R}.
\]
Using~\eqref{1e} we see that 
\[
\sum_{|z_j|<s} \frac{2}{|z-z_j|}
\leq  \frac{16}{ \sqrt{3\varepsilon}} 
\frac{L \log M(4R,f)+O(1)}{|z|}
\leq  \frac{17 L^3}{ \sqrt{3\varepsilon}(\log 2)^2} 
\frac{\log M(|z|,f)}{|z|},
\]
provided $R$ is sufficiently large.
The conclusion follows with
\[
\tau=
8L^2 +   \frac{17 L^3}{\sqrt{3\varepsilon}(\log 2)^2}.
\]
\end{proof}
The proof actually yields the following result.
\begin{lemma} \label{upperbound2}
Let $f$ be an entire satisfying~\eqref{1d}.
Then for each $\varepsilon>0$ there exists $\tau>0$ such that 
if $R$ is sufficiently large, then there exist 
$l\leq n(4R,0)$ and $c_1,\dots,c_l\in D(0,4R)$ and 
$r_1,\dots,r_l>0$ such that 
\[
A(R)\setminus U_\tau(f) \subset 
\bigcup_{k=1}^l D(c_k,r_k)
\quad\text{and}\quad 
\sum_{k=1}^l r_k^2 \leq \varepsilon R^2
\]
\end{lemma}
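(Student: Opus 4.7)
The plan is to extract Lemma~\ref{upperbound2} directly from the proof of Lemma~\ref{upperbound}, which already produces the desired exceptional disks on the nose. Inequality~\eqref{1f} with $s=4R$ splits $|f'(z)/f(z)|$ into a term involving $T(4R,f)$ and a sum over the zeros of $f$ in $D(0,4R)$. The first term was bounded \emph{pointwise on all of} $A(R)$ by $8L^2\log M(|z|,f)/|z|$, with no exceptional set. Consequently the only obstruction to $z\in A(R)$ belonging to $U_\tau(f)$ is that the zero-sum $\sum_{|z_j|\le 4R}2/|z-z_j|$ be too large.

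The second step is to re-run the Fuchs--Macintyre estimate, retaining the exceptional disks rather than only their total area. Applying Lemma~\ref{fuchs} with $m=n(4R,0)$ and $H=\frac{1}{2}\sqrt{\varepsilon}\,R$ (that is, with $\varepsilon$ in place of $3\varepsilon$ in the proof of Lemma~\ref{upperbound}) produces $l\le m=n(4R,0)$ disks $D(c_k,r_k)$ with
\[
\sum_{k=1}^l r_k^2\le 4H^2=\varepsilon R^2 ,
\]
outside of which the zero-sum admits the desired upper bound. Combining this with the pointwise estimate of the first term, any $z\in A(R)$ lying outside $\bigcup_k D(c_k,r_k)$ belongs to $U_\tau(f)$ for the same $\tau$ (up to the harmless replacement of $\varepsilon$ by $\varepsilon/3$ that is absorbed into the formula for $\tau$). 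This is precisely the inclusion $A(R)\setminus U_\tau(f)\subset\bigcup_{k=1}^l D(c_k,r_k)$, together with the bound $l\le n(4R,0)$.

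The only item not entirely immediate is the localisation $c_k\in D(0,4R)$. The Fuchs--Macintyre covering is constructed so that each $D(c_k,r_k)$ contains at least one of the points $z_1,\dots,z_m$, which in our situation satisfy $|z_j|\le 4R$. Any covering disk disjoint from $\overline{D(0,4R)}$ contains no $z_j$ and is irrelevant to our inclusion, so it may simply be discarded; each remaining disk meets $\overline{D(0,4R)}$, and by replacing its centre by the nearest point of $D(0,4R)$ and enlarging the radius by a factor of at most $2$ we may assume $c_k\in D(0,4R)$ while inflating $\sum r_k^2$ by at most a factor of~$4$. Rescaling $\varepsilon$ at the outset compensates for this factor, and we obtain the stated conclusion.

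I expect this relocation step — verifying that the Fuchs--Macintyre centres may be moved inside $D(0,4R)$ with a controlled inflation of radii — to be the only piece of genuine work beyond what the proof of Lemma~\ref{upperbound} already records; the rest is purely a matter of reading off the dependencies of that proof on the exceptional set.
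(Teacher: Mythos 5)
Your proposal is correct and follows the same route the paper intends: the statement is announced as a by-product of the proof of Lemma~\ref{upperbound}, and you read off precisely which parts of that proof are pointwise (the $T(4R,f)$ term) and which parts generate the exceptional disks (the Fuchs--Macintyre application). Your choice of $H$ to make $4H^2=\varepsilon R^2$, the bound $l\le m=n(4R,0)$, and the observation that $\tau$ only changes by a harmless constant, all match what the paper has in mind.

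The one place where you do more work than needed is the localisation $c_k\in D(0,4R)$. Your relocation-and-inflation argument is sound, but there is a shorter route that avoids assuming any structural property of the Fuchs--Macintyre covering. Simply discard every disk $D(c_k,r_k)$ that is disjoint from $A(R)$; these contribute nothing to covering $A(R)\setminus U_\tau(f)$. For each remaining disk one has $D(c_k,r_k)\cap A(R)\neq\emptyset$, hence $|c_k|\le 2R+r_k$; and since $r_k^2\le\sum_j r_j^2\le\varepsilon R^2$ we get $r_k\le\sqrt{\varepsilon}\,R<2R$ once $\varepsilon<4$ (which one may assume). Therefore $|c_k|<4R$ automatically, without moving centres or inflating radii, so no rescaling of $\varepsilon$ is required. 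Apart from this cosmetic simplification, your argument is correct and coincides with the paper's.
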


\subsection{A lower bound for the logarithmic derivative}\label{lowerboundld}
The results of this subsection are minor modifications of results
of Miles and Rossi~\cite{Miles96}.
The differences between their results and the results below
are explained at the end of this subsection.

Let $f$ be an entire function of finite order $\rho(f)$
and denote by $n(r)$ the number 
of zeros of $f$ in the disk of radius $r$ around~$0$.

\begin{lemma} \label{lemma1}
Suppose that there exists $r_0>0$ and $K>1$ such that
\begin{equation}\label{2a}
n(2r)\leq K n(r)
\quad\text{for}\ r\geq r_0 .
\end{equation}
For $\mu>0$ let $F_\mu$ be the set of all $r\geq r_0$ for which
\begin{equation}\label{2b}
n(t)\leq \left(\frac{t}{r}\right)^\mu  n(r) 
\quad\text{for}\ t\geq r
\end{equation}
while 
\begin{equation}\label{2c}
n(t)\geq \left(\frac{t}{r}\right)^\mu  n(r)
\quad\text{for}\  r_0\leq t\leq r.
\end{equation}
Then, given $\delta>0$,  there exists $\mu>0$ such that
\[
\meas\left(F_\mu \cap [R,2R]\right) \geq (1-\delta)R
\]
for all $R\geq 2r_0$.
\end{lemma}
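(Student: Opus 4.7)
I would pass to the logarithmic coordinate $u=\log r$ and set $\phi(u):=\log n(e^u)$, a right-continuous non-decreasing function. Hypothesis \eqref{2a} becomes $\phi(u+\log 2)-\phi(u)\leq \log K$, and the conditions \eqref{2b}, \eqref{2c} defining $F_\mu$ say that the slope of $\phi$ over any interval $[w_1,w_2]\subseteq[u_0,\infty)$ having $u$ as an endpoint is at most $\mu$ (here $u_0:=\log r_0$). Thus $r\notin F_\mu$ iff $u$ is an endpoint of some ``super-$\mu$-slope'' interval $[u_1,u_2]$, i.e.\ one with $\phi(u_2)-\phi(u_1)>\mu(u_2-u_1)$; and a one-line intermediate-value argument shows every interior point of such an interval is also an endpoint of a shorter super-$\mu$-slope interval. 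So the bad set $B_\mu$, on the $u$-scale, equals the union of all super-$\mu$-slope intervals.

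Iterating \eqref{2a} gives $\phi(u+h)-\phi(u)\leq h\log_2 K+\log K$ for $h>0$, which forces every super-$\mu$-slope interval to have length at most $h^*:=(\log K)/(\mu-\log_2 K)$ whenever $\mu>\log_2 K$. To estimate $\meas(B_\mu\cap[\log R,\log 2R])$ I would split $B_\mu=E^+\cup E^-$ according to whether the bad interval extends forward or backward from $u$, and apply a Riesz sunrise-style argument to the right-continuous function $\psi(u):=\phi(u)-\mu u$, which has only upward jumps and tends to $-\infty$ as $u\to\infty$. The argument identifies each maximal component $(a_k,b_k)$ of $E^\pm$ as an interval on which $\psi(a_k)=\psi(b_k)$, equivalently $\phi(b_k)-\phi(a_k)=\mu(b_k-a_k)$. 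Summing over the components meeting $[\log R,\log 2R]$ (all confined to $[\log R-h^*,\log 2R+h^*]$) and using \eqref{2a} a bounded number of times to bound the total increment of $\phi$ over this enlarged range gives
\[
\meas\bigl(B_\mu\cap[\log R,\log 2R]\bigr)\leq \frac{C\log K}{\mu}
\]
for an absolute constant $C$. Reverting to the $r$-scale multiplies by at most $2R$, and the conclusion follows by taking $\mu\geq 2C(\log K)/\delta$.

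The step I expect to be most delicate is the sunrise-style analysis for the right-continuous step function $\psi$, specifically verifying $\psi(a_k)=\psi(b_k)$ on maximal components despite the jumps. The structural facts that make this clean are that $\phi$'s jumps all go upward (so $\psi$ has no downward jumps) and that $\psi\to-\infty$ (which holds as soon as $\mu>\log_2 K$, by the global rate bound obtained by iterating \eqref{2a}); together these ensure that $\sup_{v\geq u}\psi(v)$ is attained and equals $\psi$ at the two endpoints of each component. If one wished to bypass this case analysis, a one-dimensional Besicovitch-type covering of the super-$\mu$-slope intervals would give a bound of the same form $O((\log K)/\mu)$ at the cost of slightly worse constants.
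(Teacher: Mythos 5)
Your approach is genuinely different from the paper's and is, in principle, sound. The paper argues directly on the $r$-scale with a greedy covering: it picks $s_1$ near $\inf E_1$, finds $t_1>s_1$ violating \eqref{2b}, covers $[s_1,t_1]$, and iterates; the crucial estimate is that the chain inequality $n(t_k)>(t_k/s_k)^\mu n(t_{k-1})$ telescopes into $\prod(t_k/s_k)^\mu\leq K^2$, which converts into a linear measure bound for $E_1$ via $\log(1+x)\geq x\log2$. Your route passes to $u=\log r$, $\psi=\phi-\mu u$, and runs a rising-sun argument. The reduction is correct: the intermediate-value observation does show that $B_\mu$ is exactly the union of super-$\mu$-slope intervals, and iterating \eqref{2a} does give the length bound $h^*$ and the forcing $\psi\to-\infty$ once $\mu>\log_2 K$. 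Reverting to the $r$-scale costs a factor $2R$ and the conclusion then follows as you say.

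The one place where your sketch is thinner than it looks is the justification of $\psi(a_k)\leq\psi(b_k)$ on a component $(a_k,b_k)$ of $E^+$ (this is the inequality you actually need; the reverse inequality $\psi(a_k)\geq\psi(b_k)$ follows for free from $a_k\notin E^+$). Your stated reason --- that $\sup_{v\geq u}\psi(v)$ is attained and equals $\psi$ at both endpoints --- only recovers $\Psi(a_k)=\psi(a_k)$ and $\Psi(b_k)=\psi(b_k)$ with $\Psi$ non-increasing, which gives $\psi(a_k)\geq\psi(b_k)$, i.e.\ the wrong direction. The inequality you need comes from the interior part of the sunrise argument (show $\psi(x)\leq\psi(b_k)$ for $x\in(a_k,b_k)$ and let $x\downarrow a_k$). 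That argument does go through here, but the reason is not merely ``sup attained'': it is that $\psi$ is upper semicontinuous (right-continuity plus upward jumps give $\psi(x)\geq\psi(x^-)$ and $\psi(x)=\psi(x^+)$), so the $\sup$ in the definition of the stopping point $x^*$ is actually a maximum, and the contradiction closes. A second small wrinkle is that $E^-$ need not be open (a jump point of $\phi$ can lie in $E^-$ without a left neighbourhood doing so), so the component decomposition for $E^-$ has to be taken modulo the countable set of jump points; this is harmless for measure but should be said. All of this is avoided by your Besicovitch/Vitali fallback, which I think is the cleaner way to run this route, and also by the paper's greedy covering, which never needs a sunrise lemma at all. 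Net comparison: the paper's argument is elementary and self-contained; yours is shorter once one has a sunrise lemma for USC right-continuous functions in hand, and it exposes the ``$\phi$-increment budget'' $O(\log K)$ more transparently, at the cost of the USC bookkeeping just described.
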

\begin{proof}
It follows from~\eqref{2a} that if $t\geq 2r$ and if $m\in\N$ is 
chosen such that $2^mr\leq t<2^{m+1}r$, then
\[
n(t)\leq K^{m+1} n\left(2^{-m-1}t\right) \leq K^{m+1} n(r)
\leq K^{2m} n(r) 
=\exp \left(2m\log K\right) n(r) .
\]
We also have $m\leq (\log (t/r))/(\log 2)$ and thus
\[
n(t)\leq
\exp \left(2 \log\left(\frac{t}{r}\right) \frac{\log K}{\log 2}\right) n(r)
=\left(\frac{t}{r}\right)^\frac{2\log K}{\log 2} n(r).
\]
We see that if 
\[
\mu \geq \frac{2\log K}{\log 2},
\]
then
\begin{equation}\label{2d}
n(t)\leq \left(\frac{t}{r}\right)^\mu  n(r) 
\quad\text{for}\ t\geq 2r
\end{equation}
so that condition~\eqref{2b} is satisfied as soon as
\begin{equation}\label{2d1}
n(t)\leq \left(\frac{t}{r}\right)^\mu  n(r) 
\quad\text{for}\ r\leq t\leq 2r.
\end{equation}
Let now $R\geq 2r_0$ and let $E_1$ be the set of all 
$r\in [R,2R]$ where~\eqref{2b} does not hold and let 
$E_2$ be the set of all
$r\in [R,2R]$ where~\eqref{2c} does not hold.
We shall show that 
\[
\meas E_1 \leq \tfrac12\delta R
\quad\text{and}\quad
\meas E_2 \leq \tfrac12\delta R
\]
if $\mu$ is chosen large enough.
The conclusion then follows.

To prove the claim about $E_1$ we may assume that $E_1\neq\emptyset$
and choose
\[
s_1\in E_1\cap \left[ \inf E_1,\inf E_1 +\tfrac18 \delta R\right] 
\]
Then there exists $t_1>s_1$ with 
\[
n(t_1)> \left(\frac{t_1}{s_1}\right)^\mu  n(s_1) 
\]
and because of~\eqref{2d1} we have $t_1\leq 2s_1\leq 4R$.
Inductively we define 
\[
s_k\in E_1\cap \left[ \inf(E_1\cap[t_{k-1},2R]),
\inf(E_1\cap[t_{k-1},2R]) +2^{-k-2} \delta R\right]
\]
and choose $t_k\in (s_k,2s_k]$ with
\begin{equation}\label{2e}
n(t_k)> \left(\frac{t_k}{s_k}\right)^\mu  n(s_k),
\end{equation}
as long as $E_1\cap[t_{k-1},2R]\neq \emptyset$. However,
noting that $n(t_k)>n(s_k)\geq n(t_{k-1})$ and 
$n(t_k)\leq n(2s_k)\leq n(4R)$ we see that the process terminates so
that there exists $N\in\N$ with 
$E_1\cap[t_N,2R]=\emptyset$ and
\[
E_1\subset \bigcup_{k=1}^N \left[ s_k-2^{-k-2}\delta R,t_k\right].
\]
Since $s_k\geq t_{k-1}$ it follows from~\eqref{2e} that
\[
n(t_k)> \left(\frac{t_k}{s_k}\right)^\mu  n(t_{k-1})
\]
and hence that
\[
n(4R)\geq n(t_N)> n(R) \prod_{k=1}^N \left(\frac{t_k}{s_k}\right)^\mu .
\]
Since $n(4R)\leq K^2 n(r)$ by~\eqref{2a}  this yields
\[
\prod_{k=1}^N \left(\frac{t_k}{s_k}\right)^\mu \leq K^2
\]
and thus
\[
\mu \sum_{k=1}^N \log \frac{t_k}{s_k} \leq 2\log K.
\]
Since $0\leq t_k-s_k\leq s_k\leq 2R$ and since
$\log(1+x)\geq x\log 2$ for $0\leq x\leq 1$ we have
\[
\log \frac{t_k}{s_k}
=
\log  \left(1+ \frac{t_k-s_k}{s_k}\right)
\geq \frac{t_k-s_k}{s_k}\log 2
\geq (t_k-s_k)\frac{\log 2}{2R}
\]
and thus 
\[
\sum_{k=1}^N (t_k-s_k) \leq \frac{2R}{\log 2}
\sum_{k=1}^N \log \frac{t_k}{s_k}\leq 
\frac{4R\log K}{\mu \log 2}.
\]
Choosing $\mu >(16 \log K)/(\delta\log 2)$ we obtain
\[
\sum_{k=1}^N (t_k-s_k) \leq \tfrac14 \delta R
\]
and thus  
\[
\meas E_1 \leq \sum_{k=1}^N (t_k-s_k+2^{-k-2}\delta R)\leq \tfrac12\delta R.
\]

The estimate for $E_2$ is similar.
Here we choose
\[
s_1\in E_2\cap \left[ \sup E_2-\tfrac18 \delta R,\sup E_2 \right] 
\]
and  $r_1\in [r_0,s_1)$ with 
\[
n(r_1)< \left(\frac{r_1}{s_1}\right)^\mu  n(s_1) .
\]
It follows from~\eqref{2d} that $r_1\in \left[\frac12 s_1,s_1\right)$.
Inductively we choose 
\[
s_k\in E_2\cap \left[ \sup (E_2\cap [R,r_{k-1}])-2^{-k-2} \delta R,\sup (E_2\cap [R,r_{k-1}]) \right] 
\]
and  $r_k\in \left[\frac12 s_k,s_k\right)$ with 
\[
n(r_k)< \left(\frac{r_k}{s_k}\right)^\mu  n(s_k) .
\]
Again the process stops and there exists $N\in\N$ with 
\[
E_2\subset \bigcup_{k=1}^N \left[ r_k,s_k+2^{-k-2}\delta R\right]
\]
and
\[
n\left(\tfrac12 R\right) \leq n(r_N) \leq 
n(s_1) 
\prod_{k=1}^N \left(\frac{r_k}{s_k}\right)^\mu 
\leq 
K^2 n\left(\tfrac12 R\right)
\prod_{k=1}^N \left(\frac{r_k}{s_k}\right)^\mu.
\]
Thus
\[
\mu \sum_{k=1}^N \log \frac{s_k}{r_k} \leq 2\log K
\]
and as before this yields
\[
\meas E_2 \leq \sum_{k=1}^N (s_k-r_k+2^{-k-2}\delta R)\leq \tfrac12\delta.
\]
\end{proof}

We note that follows from~\eqref{2b} and~\eqref{2c} that $n(t)$ 
is continuous at $t=r$, meaning that there is no zero of $f$ on the circle of radius $r$
around~$0$. We also note that~\eqref{2b} and~\eqref{2c} remain valid if $\mu$ is 
replaced by a larger number.

We shall assume that $f(0)\neq 0$ and denote by $(z_j)$ the sequence of zeros 
of~$f$, ordered such that $|z_1|\leq|z_2|\leq \dots$. Replacing $K$ by a larger 
number if necessary, we may assume that~\eqref{2a} holds with $r_0=|z_1|$. 

For $0<\beta <1$ and $r\geq 0$ we put
\[
U(r)=\left\{\theta\in [0,2\pi]: \left|\frac{f'(re^{i\theta})}{f(re^{i\theta})}\right|\geq 
\beta \frac{n(r)}{r} \right\}.
\]
\begin{lemma} \label{lemma2}
If~\eqref{2b} and~\eqref{2c} hold for some $\mu\geq \max\{\rho(f),1\}$, then
\begin{equation}\label{2k}
\meas U(r) \geq \frac{2\pi(1-\beta)^2}{(\beta +3\pi\mu)^2},
\end{equation}
provided $r$ is sufficiently large.
\end{lemma}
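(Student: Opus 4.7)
My plan is to combine the argument principle, which provides an integrated lower bound for $|f'/f|$ on the circle $|z|=r$, with an $L^2$ upper bound derived from the Hadamard factorization; a Cauchy--Schwarz step then converts the disparity into the desired lower bound on $\meas U(r)$.

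\textbf{Step 1 (argument principle).} By the remark preceding the lemma, conditions~\eqref{2b} and~\eqref{2c} force $f$ to have no zero on $|z|=r$, so the winding number of $\theta\mapsto f(re^{i\theta})$ equals $n(r)$. Since $|(d/d\theta)\arg f(re^{i\theta})|\le r\,|f'/f|(re^{i\theta})$, integration yields $\int_0^{2\pi} r|f'/f|\,d\theta \ge 2\pi n(r)$. Because $r|f'/f|<\beta n(r)$ on the complement of $U(r)$, this complement contributes at most $\beta n(r)(2\pi-\meas U(r))$, so
\[
\int_{U(r)} r\left|\frac{f'(re^{i\theta})}{f(re^{i\theta})}\right| d\theta \;\ge\;\bigl(2\pi(1-\beta)+\beta\,\meas U(r)\bigr)n(r).
\]

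\textbf{Step 2 ($L^2$ bound).} Using $\rho(f)\le\mu$ and the assumption $f(0)\neq 0$, the Hadamard factorization gives $f'/f=P'(z)+\sum_j\bigl[(z-z_j)^{-1}+Q_j(z)\bigr]$ with $\deg P<\mu$ and $Q_j$ the primary-factor corrections of degree $<\mu$. I split the sum at $|z_j|=r$. For $|z_j|>r$, Abel summation against $n(t)$ combined with the upper growth~\eqref{2b} gives a pointwise contribution of order $\mu\, n(r)/r$. The polynomial $P'$ has size $r^{\mu-1}$, which~\eqref{2c} (using $\mu\ge 1$) reduces to $O(\mu\, n(r)/r)$. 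For the inner sum $|z_j|<r$, use the identity $\int_0^{2\pi} d\theta/|re^{i\theta}-z_j|^2 = 2\pi/(r^2-|z_j|^2)$ together with Abel summation against $n(t)$ and the lower density bound~\eqref{2c}. Assembling the three contributions yields an $L^2$ bound of the shape
\[
\int_0^{2\pi} r^2\left|\frac{f'(re^{i\theta})}{f(re^{i\theta})}\right|^2 d\theta \;\le\;C(\mu)\, n(r)^2,
\]
where $C(\mu)$ is tracked carefully to match the coefficient in the conclusion.

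\textbf{Step 3 (Cauchy--Schwarz).} Applying Cauchy--Schwarz to the integral over $U(r)$ from Step~1 and inserting the $L^2$ bound of Step~2, with $m:=\meas U(r)$, produces a quadratic inequality of the form $(2\pi(1-\beta)+\beta m)^2 \le m\cdot C(\mu)$, whose smallest solution gives $m\ge 2\pi(1-\beta)^2/(\beta+3\pi\mu)^2$ for the correct calibration of $C(\mu)$.

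\textbf{Main obstacle.} The technical heart is Step~2. The pointwise bound $|f'/f|\le\sum 1/|z-z_j|$ is not locally square-integrable, so the argument must genuinely exploit angular averaging together with the uniform zero-density control provided by~\eqref{2b} and~\eqref{2c}. These hypotheses encode that zeros do not cluster abnormally near $|z|=r$, and this is precisely what renders the $L^2$ integral finite with a bound linear in $\mu$. Tracking the three pieces (inner zeros, outer zeros, polynomial part) quantitatively so that they combine into the single coefficient $3\pi\mu$ is the delicate step of the proof.
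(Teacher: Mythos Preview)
Your outline has the right global architecture --- argument principle plus an $L^2$ estimate plus Cauchy--Schwarz --- but Step~2 contains a genuine gap, and the bound you want there is simply false.

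You claim an unconditional estimate $\int_0^{2\pi} r^2|f'/f|^2\,d\theta \le C(\mu)\,n(r)^2$. Consider $f(z)=(z-a)e^{cz}$: here $\rho(f)=1$, $n(r)=1$ for $r>|a|$, and both \eqref{2b} and \eqref{2c} hold trivially with $\mu=1$; yet $zf'(z)/f(z)=cz+z/(z-a)$, so $\|L\|_2\sim cr\to\infty$. Your justification ``$P'$ has size $r^{\mu-1}$, which \eqref{2c} reduces to $O(\mu\,n(r)/r)$'' misreads \eqref{2c}: that condition is a \emph{lower} bound on $n(t)$ for $t\le r$, equivalently an upper bound on $n(r)$ in terms of $n(r_0)$; it does not give $n(r)\gtrsim r^\mu$. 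Thus there is no a~priori control on the polynomial part $P'$ in terms of $n(r)$. (Your treatment of the inner zeros via $\int d\theta/|re^{i\theta}-z_j|^2$ also only addresses diagonal terms, but this is secondary.)

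The paper's argument (following Miles--Rossi) circumvents this by a bootstrap. Writing $L(\theta)=re^{i\theta}f'(re^{i\theta})/f(re^{i\theta})$ and taking a Weierstra{\ss} product with primary factors of degree $q=[\mu+1]$, one splits $L=g+s$ where $g$ collects Fourier modes $0,\dots,q$ (including all of $P'$) and $s$ is the remainder. Only the tail $s$ is estimated directly: Abel summation against \eqref{2b} and \eqref{2c} gives $\|s\|_2\le\pi\mu\,n(r)$. The key new idea is that $|g(\theta)|^2$ is a trigonometric polynomial of degree $\le q$, so if $\meas U(r)<\pi/(2q+1)$ one can show $\int_{V(r)}|g|^2\ge\pi\|g\|_2^2$; since $|L|<\beta n(r)$ on $V(r)$ and $\|s\|_2$ is small, this self-consistently yields $\|g\|_2\le(\beta+2\pi\mu)n(r)$, hence $\|L\|_2\le(\beta+3\pi\mu)n(r)$. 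Only then does the Cauchy--Schwarz step of your Step~3 apply. (In the example above $\meas U(r)=2\pi$, so one is in the trivial case $\meas U(r)\ge\pi/(2q+1)$ and the $L^2$ bound is never invoked.) This bootstrap through the low-degree structure of $g$ is the missing idea in your proposal.
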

\begin{proof}
We put $q=[\mu+1]$ and write
\[
f(z)=e^{P(z)}\prod_{j=1}^\infty E\left(\frac{z}{z_j},q\right),
\]
where $P(z)=\sum_{m=0}^q a_mz^m$
is a polynomial of degree at most $q$ and where
$E(\cdot,q)$ denotes the Weierstra{\ss} primary factor.
Note that $q$ will in general be much larger than $\rho(f)$ 
so that the above form of $f$ is not the usual Hadamard 
factorization. We put
\[
L(\theta)= re^{i\theta}\frac{f'(re^{i\theta})}{f(re^{i\theta})}.
\]
Then
\[
L(\theta)
=re^{i\theta}P'(re^{i\theta})+\sum_{m=-\infty}^\infty b_m(r) e^{im\theta}
\]
where~\cite[p.~350]{Townsend87}
\[
b_m(r)=-\sum_{|z_j|>r} \left(\frac{z_j}{r}\right)^{-m}
\quad\text{for}\ m>q
\]
while
\[
b_m(r)=\sum_{|z_j|< r} \left(\frac{z_j}{r}\right)^{-m}
\quad\text{for}\ m<0.
\]
For $m>q$ we deduce from~\eqref{2b} that
\[
\begin{aligned}
|b_m(r)| 
&\leq \sum_{|z_j|>r} \left(\frac{|z_j|}{r}\right)^{-m}\\
&= \int_r^\infty \left(\frac{t}{r}\right)^{-m} dn(t)\\
&= -n(r)+m \int_r^\infty \left(\frac{t}{r}\right)^{-m} n(t)\frac{dt}{t}\\
&\leq -n(r)+m \, n(r) \int_r^\infty \left(\frac{t}{r}\right)^{\mu-m} \frac{dt}{t}\\
&= n(r)\frac{\mu}{m-\mu}.
\end{aligned}
\]
Thus
\[
\sum_{m>q} |b_m(r)|^2 \leq \frac{\pi^2}{6}\mu^2 n(r)^2.
\]
Similarly we find for $m<0$ that 
\[
\begin{aligned}
|b_m(r)| 
&\leq \int_{r_0/2}^r \left(\frac{t}{r}\right)^{-m} dn(t)\\
&= n(r)+m \int_{r_0}^r \left(\frac{t}{r}\right)^{-m} n(t)\frac{dt}{t}\\
&\leq n(r)+ m \, n(r) \int_{r_0}^r \left(\frac{t}{r}\right)^{\mu-m} \frac{dt}{t}\\
&= n(r)\left( \frac{\mu}{\mu-m} +m \left(\frac{r_0}{r}\right)^{\mu-m}\right).
\end{aligned}
\]
For large $r$ we thus have 
\[
|b_m(r)| \leq \frac{2\mu}{\mu-m} n(r)
\]
for all $m<0$ and this yields
\[
\sum_{m<0} |b_m(r)|^2 \leq \frac{2\pi^2}{3}\mu^2 n(r)^2.
\]
With
\[
g(\theta)=re^{i\theta} P'(re^{i\theta})+\sum_{m=0}^q b_m(r) e^{im\theta}
=\sum_{m=0}^q \left(ma_m r^m +b_m(r)\right) e^{im\theta}
\]
we thus have 
\begin{equation}\label{2l1}
L(\theta)=g(\theta)+s(\theta)
\end{equation}
where
\[
\|s\|_2^2 
=\frac{1}{2\pi}\int_0^{2\pi}|s(\theta)|^2d\theta
=\sum_{m<0}|b_m(r)|^2 + \sum_{m>q}|b_m(r)|^2
\leq \pi^2\mu^2 n(r)^2
\]
so that
\begin{equation}\label{2m1}
\|s\|_2\leq \pi\mu \,n(r).
\end{equation}

In order to estimate $\|g\|_2$ we write
\[
|g(\theta)|^2= \sum_{m=-q}^q h_m(r) e^{im\theta}
\]
and note that $h_{-m}(r)=\overline{h_m(r)}$ and
\begin{equation}\label{2n}
| h_m(r)| 
=\left|\frac{1}{2\pi}\int_0^{2\pi}|g(\theta)|^2 e^{-im\theta} d\theta\right|
\leq \|g\|_2^2
=\frac{1}{2\pi}\int_0^{2\pi}|g(\theta)|^2d\theta =h_0(r)
\end{equation}
for all~$m$.
Let now $V(r)=[0,2\pi]\setminus U(r)$ so that
\begin{equation}\label{2m2}
|L(\theta)|< \beta n(r)
\quad\text{for}\ \theta\in V(r).
\end{equation}
Since 
\[
0=\int_0^{2\pi} h_m(r)e^{im\theta} d\theta
=\int_{U(r)}  h_m(r)e^{im\theta}d\theta + 
\int_{V(r)}  h_m(r)e^{im\theta} d\theta
\]
for $m\neq 0$ we deduce from~\eqref{2n} that
\begin{equation}\label{2o}
\begin{aligned}
\int_{V(r)}|g(\theta)|^2 d\theta 
&= \int_{V(r)} h_0(r) d\theta -
\sum_{1\leq |m|\leq q}
\int_{U(r)}  h_m(r)e^{im\theta}d\theta\\
&\geq h_0(r) \meas V(r) -
\sum_{1\leq |m|\leq q}
|h_m(r)|  \meas U(r) \\
&\geq  h_0(r) \meas V(r) -2q   h_0(r)\meas U(r) \\
&= h_0(r)\left(  \meas V(r)-2q \meas U(r)\right) .
\end{aligned}
\end{equation}
If $\meas U(r) \geq \pi/(2q+1)$, then~\eqref{2k} follows
since $q\leq \mu+1$ so that $2q+1\leq 2\mu+3\leq 5\mu$ and this
yields
\[
\frac{\pi}{2q+1} \geq
\frac{2\pi(1-\beta)^2}{(\beta +3\pi\mu)^2}.
\]
We may thus assume that $\meas U(r) <  \pi/(2q+1)$ so that 
$\meas V(r) > 2\pi - \pi/(2q+1)$.
We deduce from~\eqref{2n} and~\eqref{2o} that
\[
\int_{V(r)}|g(\theta)|^2 d\theta 
\geq  \left( 2\pi - \frac{\pi}{2q+1} -2q  \frac{\pi}{2q+1} \right) 
h_0(r)=\pi \|g\|_2^2.
\]
Using~\eqref{2l1}, \eqref{2m1} and~\eqref{2m2} we find that
\[
\begin{aligned}
\frac{1}{\sqrt{2}}  \|g\|_2
&\leq
\left(\frac{1}{2\pi} \int_{V(r)}|g(\theta)|^2 d\theta \right)^{1/2}\\
&\leq\left(\frac{1}{2\pi} \int_{V(r)} |L(\theta)|^2
d\theta \right)^{1/2} 
+ \left(\frac{1}{2\pi} \int_{V(r)} |s(\theta)|^2d\theta \right)^{1/2}\\
&\leq 
\beta n(r) + \pi\mu n(r)
\end{aligned}
\]
and hence 
\[
\|g\|_2 \leq \left(\beta +2\pi\mu\right) n(r).
\]
Combining this with~\eqref{2l1} and~\eqref{2m1} we conclude that
\begin{equation}\label{2r}
\|L\|_2 \leq  \|g\|_2 + \|s\|_2
\leq \left(\beta +3\pi\mu\right) n(r).
\end{equation}
On the other hand, it follows from the argument principle
that 
\[
n(r) 
= \frac{1}{2\pi i} \int_{|z|=r} \frac{f'(z)}{f(z)} dz
= \frac{1}{2\pi} \int_0^{2\pi} L(\theta) d\theta
= \frac{1}{2\pi} \int_{U(r)} L(\theta) d\theta 
+  \frac{1}{2\pi} \int_{V(r)} L(\theta) d\theta.
\]
Now the Cauchy-Schwarz inequality,
\eqref{2m2} and~\eqref{2r} yield 
\[
\begin{aligned}
n(r) 
& \leq
\frac{1}{2\pi}
\left( \int_{U(r)}   d\theta\right)^{1/2}
\left( \int_{U(r)}  |L(\theta)|^2  d\theta\right)^{1/2} +\beta n(r) \\
& \leq 
\frac{1}{\sqrt{2\pi}} \sqrt{\meas U(r)}
\|L\|_2   +\beta n(r) \\
& \leq
\frac{1}{\sqrt{2\pi}} \sqrt{\meas U(r)}
\left(\beta +3\pi\mu\right) n(r)  +\beta n(r).
\end{aligned}
\]
Hence 
\[
\meas U(r)\geq \frac{2\pi (1-\beta)^2}{(\beta +3\pi\mu)^2}.
\]
\end{proof}
\begin{remark}
It was shown at the beginning of the proof of 
Lemma~\ref{lemma1} that if $n(r)$ satisfies~\eqref{2a},
then there exists $\rho>0$ such that
\begin{equation}\label{2x}
n(r)= O\left( r^\tau\right)
\end{equation}
as $r\to\infty$. (In fact, the argument shows that we can
take $\tau=(2\log K)/(\log 2)$, and a slightly more careful
estimate will give $\tau=(\log K)/(\log 2)$.)

Miles and Rossi~\cite{Miles96} show that if $n(r)$ satisfies~\eqref{2x},
then~\eqref{2b} and~\eqref{2c} hold on a set 
of logarithmic density $1-\delta$ if $\mu$ is sufficiently
large. They then use this to show that~\eqref{2k} holds on a set
of logarithmic density $1-\delta$.

For our applications, however, a set of positive 
logarithmic density is not sufficient. Therefore we introduced
the additional hypothesis~\eqref{2a}. Lemma~\ref{lemma1}
says that with this additional hypothesis~\eqref{2k} holds 
on a set of density $1-\delta$.

The proof of Lemma~\ref{lemma2}, which says 
that~\eqref{2b} and~\eqref{2c} imply~\eqref{2k}, is 
essentially the same 
as that of Miles and Rossi~\cite{Miles96} and it is included here
only for completeness.

We also note that~\eqref{2x} implies that there exists a constant $K$ such that~\eqref{2a} 
holds on a set of positive density. In fact, by taking $K$ large this density can
be taken arbitrarily close to~$1$.
\end{remark}

\subsection{Completion of the proof of Theorem~\ref{theorem1}}
Let $a\in \C\setminus E_V(f)$. 
By~\eqref{1b} we can apply Lemma~\ref{lemma1} to 
$f-a$.
With
\[
\gamma= \frac{\beta (A_N-1)}{4K^2\log 2}
\quad\text{and} \quad
c=\frac{1-\delta}{2} \frac{(1-\beta)^2}{(\beta +3\pi\mu)^2}
\]
and with
\[
V_\gamma(a) =\left\{z\in\C: 
\left| \frac{z f'(z)}{f(z)-a}\right|
\geq \gamma \log M(|z|,f)  \right\}
\]
we deduce from  Lemmas~\ref{lemma1} and~\ref{lemma2} and from~\eqref{1c}
that
$\dens(V_\gamma(a),A(R))\geq c$ for large~$R$.
We apply Lemma~\ref{upperbound} with $\varepsilon=\frac14 c$ 
to $f-a$ and with 
\[
U_\tau(f-a) =\left\{z\in\C: 
\left| \frac{zf'(z)}{f(z)-a}\right|
\leq \tau  \log M(|z|,f)  \right\}
\]
we obtain 
$\dens(U_\tau(f-a),A(R))\geq 1-\varepsilon$
if $\tau$ is sufficiently large.

We put $d=\tau/\gamma$, fix $m\geq 1/\varepsilon$
and choose $a_1,\dots,a_m\in D(0,2dm)\setminus E_V(f)$
with $|a_j-a_k|\geq 2d$ for $j\neq k$.
For $1\leq j\leq m$ we put 
\[
C_j=\left\{z\in\C: 
|f(z)-a_j|\leq d\right\}.
\]
Then the $C_j$ are pairwise disjoint and thus there exists
$j=j(R)$ with $\dens(C_j, A(R) )\leq \varepsilon$.
With 
\[
W(a_j) = \left( U_\tau(f-a_j)\cap  U_\tau(f))\cap V_\gamma(a_j) \right) \setminus  C_j
\]
we deduce from Lemma~\ref{upperbound}  that $\dens(W(a_j),A(R))\geq \frac14 c$.
For $z\in W(a_j)\cap A(R)$ we have
\[
\gamma  \log M(|z|,f)  
\leq 
\left| \frac{zf'(z)}{f(z)-a_j}\right|
\leq  \frac{|zf'(z)|}{d}
\]
and thus
\[
|zf'(z)|
\geq
d \gamma   \log M(|z|,f) 
=\tau  \log M(|z|,f)  
\geq 
\left| \frac{zf'(z)}{f(z)}\right|.
\]
Hence $|f(z)|\geq 1$ for $z\in W(a_j)\cap A(R)$. 
Moreover, if $|f(z)|\leq 4dm$, then we have
\[
\left| \frac{zf'(z)}{f(z)}\right|=
\left| \frac{zf'(z)}{f(z)-a_j}\frac{f(z)-a_j}{f(z)}\right|
\geq \frac{1}{4m} \left| \frac{zf'(z)}{f(z)-a_j}\right|
\geq \frac{\gamma}{4m}   \log M(|z|,f) 
\]
and if $|f(z)|\geq 4dm$, 
then  $|f(z)|\geq 2|a_j|$ and thus
\[
\left| \frac{zf'(z)}{f(z)}\right|=
\left| \frac{zf'(z)}{f(z)-a_j}\left(1- \frac{a_j}{f(z)}\right)\right|
\geq \frac12 \left| \frac{zf'(z)}{f(z)-a_j}\right|
\geq \frac{\gamma}{2}  \log M(|z|,f) .
\]
Summarizing the above estimates we obtain with $\sigma= \gamma/(4m)$ 
that if $z\in W(a_j)\cap A(R)$, then
\begin{equation}\label{2y}
\sigma  \log M(|z|,f) 
\leq
\left| \frac{zf'(z)}{f(z)}\right|
\leq \tau  \log M(|z|,f)  
\quad\text{and}\quad
|f(z)|\geq 1.
\end{equation}
Noting that $|z|^0=1$ we have thus proved that if $f$ satisfies~\eqref{1a}, then 
\[
\dens(T(f,\sigma,\tau,0,0),A(R))\geq\frac14 c
\]
for all large~$R$. In other words, we have proved the special case $q=\lambda=0$ of our
theorem.
We may apply this result to 
\[
g(z)=\frac{f(z)-a}{P(z)}
\]
where $a$ is chosen such that $f$ has infinitely many $a$-points and
where $P$ is a polynomial of degree greater than
$q$ whose zeros are $a$-points of~$f$.
In fact, we have 
\[
\log M(r,g)=\log M(r,f)+O(\log r) =(1+o(1))\log M(r,f)
\] 
as $r\to\infty$ so that~\eqref{1a} holds with $f$ replaced by $g$ if the constants
$A$ and $C$ are slightly adjusted.
Hence $\dens(T(g,\sigma^*,\tau^*,0,0),A(R))\geq\eta$ if $0<\sigma^*<\sigma$, $\tau^*>\tau$ 
and $0<\eta<\frac14 c$,
provided $R$ is large enough.

Now 
\[
\frac{zg'(z)}{g(z)}
=\frac{zf'(z)}{f(z)-a}-\frac{zP'(z)}{P(z)}
=\frac{zf'(z)}{f(z)}\frac{f(z)}{f(z)-a}-\frac{zP'(z)}{P(z)}.
\]
For $z\in T(g,\sigma^*,\tau^*,0,0)$ we have $|g(z)|\geq 1$ and 
thus $|f(z)|=|P(z)g(z)+a|\geq |z|^q$, provided $|z|$ is 
sufficiently large.
Thus 
\[
\frac{f(z)}{f(z)-a}\to 1
\]
as $|z|\to \infty$, $z\in T(g,\sigma^*,\tau^*,0,0)$. Since 
\[
\frac{zP'(z)}{P(z)}\to \deg{P}
\]
as $|z|\to \infty$ we conclude that if $0<\alpha_1<\sigma^*$ and $\alpha_2>\tau^*$, then
\[
T(g,\sigma^*,\tau^*,0,0)\setminus D(0,S)\subset T(f,\alpha_1,\alpha_2,q,0)
\]
for large $S$ and hence 
$\dens(T(f,\alpha_1,\alpha_2,q,0),A(R))\geq \eta$
for large~$R$. This is the special case $\lambda=0$ of our theorem.

In order to obtain this result for general $\lambda$, we apply Lemma~\ref{upperbound2}.
We note that if $z\in A(R)$, then $|z|/\log M(|z|,f)\leq 2R/\log M(R,f)$.
This implies that if $c_1,\dots,c_l$ and $r_1,\dots,r_l$ are as in Lemma~\ref{upperbound2}
and if $z\in A(R)\setminus \bigcup_{k=1}^l D\left(c_k,r_k+2\lambda R/\log M(R,f)\right)$, 
then $D(z,\lambda |z|/\log M(|z|,f))\cap \bigcup_{k=1}^l D\left(c_k,r_k\right)=\emptyset$.
Thus
\[
\left|\frac{\zeta f'(\zeta)}{f(\zeta)}\right|\leq \tau \frac{\log M(|\zeta|,f)}{|\zeta|}
\quad\text{for} \quad 
z\in D\left(z,\lambda\frac{|z|}{\log M(|z|,f)}\right),
\]
provided
\[
z\in A(R)\setminus\bigcup_{k=1}^l D\left(c_k,r_k+\frac{2\lambda R}{\log M(R,f)}\right).
\]
Using $(a+b)^2\leq 2(a^2+b^2)$ and~\eqref{1d} and~\eqref{1e} we see that
\[
\begin{aligned}
\sum_{k=1}^l \left(r_k+ \frac{2\lambda R}{\log M(R,f)}\right)^2
&\leq 
2 \sum_{k=1}^l r_k^2 + \frac{8\lambda^2 R^2}{(\log M(R,f))^2} n(4R,0)\\
&
\leq 
2\varepsilon R^2 + \frac{8\lambda^2 L^3 R^2}{(\log 2)\log M(R,f)}\\
&\leq 3\varepsilon R^2.
\end{aligned}
\]
We see that the density of the 
set of all $z\in A(R)$ for which~\eqref{condition3} fails can be made arbitrarily small
by choosing $\alpha_2$ large. The conclusion follows.

\section{Auxiliary results for the proof of Theorem~\ref{theorem2}}\label{lemmas}
The following lemma can be proved by a simple compactness argument.
\begin{lemma}\label{koebe1}
Let $\Omega$ be a domain let $Q$ be a compact subset of $\Omega$.
Then there exists a positive constant $C$ such that if $f$ is univalent in $\Omega$ and
$z,\zeta\in Q$, then $|f'(\zeta)|\leq C|f'(z)|$.
\end{lemma}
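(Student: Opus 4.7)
The plan is to reduce the statement to the Koebe distortion theorem on small disks, and to patch the resulting local bounds together by compactness and connectedness of $\Omega$.

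Set $\delta=\dist(Q,\partial\Omega)>0$. For any $z_0\in\Omega$ with $D(z_0,\delta)\subset\Omega$ and any univalent $f$ on $\Omega$, the function
\[
g(w)=\frac{f(z_0+\delta w)-f(z_0)}{\delta f'(z_0)}
\]
is univalent on the unit disk with $g(0)=0$ and $g'(0)=1$. The Koebe distortion theorem then yields
\[
\frac{4}{27}\leq \frac{|f'(\xi)|}{|f'(z_0)|}\leq 12
\quad\text{for }\xi\in D(z_0,\delta/2),
\]
so that $|f'(\zeta)|/|f'(z)|\leq 81$ whenever $z$ and $\zeta$ lie in a common disk of this form, the constant being independent of $f$ and of $z_0$.

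To control arbitrary pairs $z,\zeta\in Q$, I would chain this local estimate. Using that $\Omega$ is path-connected and that $Q$ is compact (so that the closed $\delta/4$-neighbourhood of $Q$ lies in $\Omega$ and has only finitely many components), one builds a compact connected set $K$ with $Q\subset K\subset\Omega$ by adjoining finitely many polygonal paths in $\Omega$ to this neighbourhood. Writing $\delta'=\dist(K,\partial\Omega)>0$ and covering $K$ by finitely many disks $D(w_j,\delta'/4)$, $j=1,\dots,N$, the connectedness of $K$ implies that any two points of $Q$ are joined by a chain of at most $N$ consecutively overlapping disks from this cover. Applying the local bound of the previous paragraph to each consecutive pair in the chain gives $|f'(\zeta)|/|f'(z)|\leq 81^N$, so one may take $C=81^N$, which depends only on $\Omega$ and $Q$.

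The only subtle point is the purely topological construction of the compact connected set $K$ and the uniform bound $N$ on chain lengths; once this is in place, the analytic content is a single application of Koebe distortion on a disk, and the numerical constants come out by a direct rescaling.
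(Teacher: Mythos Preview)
Your argument is correct and is precisely the kind of ``simple compactness argument'' the paper alludes to without spelling out. The paper gives no proof of this lemma beyond that phrase, and immediately afterwards states the Koebe distortion theorem (Lemma~\ref{koebe2}) as the explicit special case where $\Omega$ is a disk, which is exactly the local input you use. So your approach---local Koebe bounds on small disks, then chaining via a finite cover of a compact connected set $K$ with $Q\subset K\subset\Omega$---matches the intended route.

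Two minor remarks. First, your computation of the constants $4/27$, $12$, and $81$ from Koebe distortion at $\rho=1/2$ is correct. Second, the construction of the compact connected $K$ and the finiteness of components of the $\delta/4$-neighbourhood of $Q$ are indeed the only points needing care; one clean way is to note that the \emph{open} $\delta/4$-neighbourhood of $Q$ has open components, each containing a disk $D(q,\delta/4)$ with $q\in Q$, so by compactness of $Q$ there are only finitely many, and one then joins their closures by finitely many arcs in~$\Omega$. With that in hand, your bound $C=81^N$ follows as you describe.
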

In principle this lemma would be sufficient for our purposes, but we note that 
the classical Koebe distortion theorem gives explicit estimates in the case where 
$\Omega$ is a disk.
\begin{lemma}\label{koebe2}
Let $f$ be univalent in  $D(a,r)$ and let  $z\in \overline{D(a,\rho r)}$,
where
$0<\rho<1$. 
Then
\[
\frac{1-\rho}{(1+\rho)^3}
|f'(a)|
\leq |f'(z)|
\leq
\frac{1+\rho}{(1-\rho)^3}
|f'(a)|.
\]
\end{lemma}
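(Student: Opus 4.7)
The plan is to reduce the desired inequality to the classical Koebe distortion theorem on the unit disk by an affine normalization. Specifically, I would introduce
\[
g(w) = \frac{f(a+rw)-f(a)}{r\, f'(a)}\quad\text{for}\ w\in D(0,1).
\]
Univalence of $f$ on $D(a,r)$ forces $f'(a)\neq 0$, so this is well defined. The function $g$ is then univalent on $D(0,1)$ with $g(0)=0$ and $g'(0)=1$, so $g$ belongs to the classical class $S$ of normalized schlicht functions.

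For any member of $S$, the classical Koebe distortion theorem asserts
\[
\frac{1-|w|}{(1+|w|)^3}\leq |g'(w)|\leq \frac{1+|w|}{(1-|w|)^3}
\quad\text{for}\ w\in D(0,1).
\]
This is the standard consequence of Bieberbach's inequality $|a_2|\leq 2$ applied to suitable compositions of $g$ with disk automorphisms, and I would quote it directly from any standard reference on univalent functions rather than reprove it.

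Undoing the change of variables, the chain rule gives
\[
f'(z) = f'(a)\, g'\!\left(\frac{z-a}{r}\right).
\]
For $z\in \overline{D(a,\rho r)}$ one has $|(z-a)/r|\leq \rho$, and since $x\mapsto (1-x)/(1+x)^3$ is decreasing while $x\mapsto (1+x)/(1-x)^3$ is increasing on $[0,1)$, substituting $w=(z-a)/r$ in the Koebe bounds and multiplying through by $|f'(a)|$ yields exactly the stated double inequality. There is no genuine obstacle here: the proof is a textbook rescaling, and the only nontrivial ingredient is the distortion theorem itself, which is a well-known classical fact the authors are entitled to cite.
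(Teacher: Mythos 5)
Your proof is correct and is precisely the standard rescaling argument the paper implicitly appeals to: the paper states Lemma~\ref{koebe2} without proof, explicitly attributing it to "the classical Koebe distortion theorem." Your normalization $g(w)=(f(a+rw)-f(a))/(rf'(a))$, the appeal to the distortion bounds for the class $S$, and the monotonicity observations needed to pass from $|w|$ to $\rho$ are all sound.
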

We shall use the following version of the Ahlfors islands theorem;
cf.~\cite[Theorem~6.2]{Hayman64}.
\begin{lemma}\label{ahlfors}
Let $D_1,D_2,D_3$ be Jordan domains with pairwise disjoint closures.
Then there exists $\mu>0$ with the following property: if $a\in\C$, $r>0$ and
$f:D(a,r)\to\C$ is a holomorphic function satisfying 
\[
\frac{|f'(a)|}{1+|f(a)|^2}\geq \frac{\mu}{r},
\]
then $D(a,r)$ has a subdomain which is mapped bijectively 
onto one of the domains~$D_\nu$.
\end{lemma}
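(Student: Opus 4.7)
The plan is to prove the lemma by contradiction via a rescaling argument that reduces the quantitative estimate to the classical (qualitative) Ahlfors three islands theorem for meromorphic functions on $\C$. Suppose no $\mu>0$ has the stated property. Then for each $n\in\N$ there exist $a_n\in\C$, $r_n>0$ and a holomorphic function $f_n:D(a_n,r_n)\to\C$ such that no subdomain of $D(a_n,r_n)$ is mapped bijectively by $f_n$ onto any of $D_1,D_2,D_3$, yet
\[
\frac{|f_n'(a_n)|}{1+|f_n(a_n)|^2}\geq \frac{n}{r_n}.
\]
Rescaling by $g_n(z)=f_n(a_n+r_nz)$ moves everything to the unit disk: the $g_n$ are holomorphic on $D(0,1)$, they inherit the island-free property (any putative island pulls back to an island of $f_n$), and they satisfy $|g_n'(0)|/(1+|g_n(0)|^2)\geq n$.

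I would next argue that $\{g_n\}$ is a normal family with respect to the spherical metric, which yields an immediate contradiction: a spherically locally uniform limit on $D(0,1)$ (possibly the constant $\infty$) would force the spherical derivatives at $0$ to stay bounded, incompatible with $|g_n'(0)|/(1+|g_n(0)|^2)\to\infty$. What is needed is therefore the ``island version'' of Montel's theorem: any family of meromorphic functions on a planar domain omitting islands over three Jordan domains with pairwise disjoint closures is normal with respect to the spherical metric.

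The bulk of the work, and the main obstacle, is justifying this normality statement. The standard route is via Zalcman's rescaling lemma: if $\{g_n\}$ were not normal at some point $z_0\in D(0,1)$, a suitable choice of points $z_n\to z_0$ and scales $\rho_n\to 0$ would produce a nonconstant meromorphic limit $F:\C\to\CC$ with bounded spherical derivative, obtained as a locally uniform spherical limit of the rescaled functions $z\mapsto g_n(z_n+\rho_n z)$. The classical Ahlfors three islands theorem applied to $F$ then furnishes a Jordan subdomain $\Delta\subset\C$ mapped bijectively by $F$ onto some $D_\nu$. A Hurwitz/argument-principle argument, applied on a slight enlargement $\Delta'\supset\overline{\Delta}$ whose image $F(\Delta')$ is still a Jordan domain with closure disjoint from the other two target domains, transfers this island to the approximating functions for all sufficiently large $n$, and hence back to an island of $f_n$, contradicting our initial assumption. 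The delicate point in this last step is not the existence of a preimage of one point of $D_\nu$ (that is Hurwitz), but controlling the boundary correspondence so that the island is actually a homeomorphism onto $D_\nu$; this is handled by choosing $\Delta'$ so that $F|_{\partial\Delta'}$ winds once around $D_\nu$ and invoking uniform convergence on $\overline{\Delta'}$.
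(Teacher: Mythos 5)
The paper does not prove Lemma~\ref{ahlfors}; it cites Hayman's quantitative Ahlfors theory (Theorem~6.2 of~\cite{Hayman64}), and the three-domain holomorphic version follows from the five-domain meromorphic one because a holomorphic function can have no island over two further Jordan domains placed near~$\infty$. Your proposal supplies an actual proof, deriving the quantitative statement from the qualitative Ahlfors islands theorem on~$\C$ via rescaling and Zalcman's lemma. This is a legitimate and well-known alternative route: it trades Hayman's quantitative covering-surface estimates for the qualitative theorem plus Zalcman, which many readers find more transparent, though both routes ultimately rest on Ahlfors' theory of covering surfaces.

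There is one point you must tighten. You state the intermediate normality criterion, and describe the Zalcman limit, for \emph{meromorphic} functions while keeping only \emph{three} target domains; that combination is false---for meromorphic families one needs five Jordan domains. What rescues the argument is that the $g_n$ are holomorphic, so the Zalcman limit $F$ is either holomorphic or identically~$\infty$ (apply Hurwitz to $1/h_n$ near any putative pole of $F$), hence entire and nonconstant, and the three-island theorem for nonconstant \emph{entire} functions is what applies. Make this explicit. Your transfer step is in the right spirit; to make the winding-number argument airtight (one cannot assume $F'$ is nonzero on $\partial\Delta$), apply the qualitative theorem to slightly enlarged Jordan domains $D_\nu^+$ with $\overline{D_\nu}\subset D_\nu^+$ and still pairwise disjoint closures: the island over $D_\nu$ then sits compactly inside the island over $D_\nu^+$, where $F$ is injective and $F'$ nonvanishing, and a nearby Jordan curve in the larger island on which $h_n\to F$ uniformly carries the required winding number around every point of $D_\nu$.
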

To estimate the Hausdorff dimension we will use a
result of McMullen~\cite{McMullen87}.
In order to state it,
consider for $l\in \N$ a
collection ${\mathcal E}_l$ of disjoint compact subsets of $\R^n$
such that the following two conditions are satisfied:
\begin{enumerate}
\item[(a)] every element of ${\mathcal E}_{l+1}$ is
contained in a
unique element of ${\mathcal E}_l$;
\item[(b)] every element of ${\mathcal E}_l$ contains at least one element of
${\mathcal E}_{l+1}$.\end{enumerate}
Denote by $E_l$ the union of all elements of ${\mathcal E}_l$ and
put $E=\bigcap^\infty_{l=1} E_l$. Suppose that
$(\Delta_l)$ and
$(d_l)$ are sequences of positive real numbers such that if $F\in
{\mathcal E}_l$, then
\[\dens(E_{l+1},F)\geq\Delta_l\]
and
\[\diam F \leq d_l.\]
Then we have the following result~\cite[Proposition~2.2]{McMullen87}.
\begin{lemma} \label{lemmamcm}
Let~$E$, ${\mathcal E}_l$, $\Delta_l$ and $d_l$ be as above. Then
\[\limsup_{l\to \infty}\frac{\sum^{l}_{j=1}\;|\log\Delta
_j|}{|\log d_l|}\geq n-\dim E.\]
\end{lemma}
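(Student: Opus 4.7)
The plan is to obtain a lower bound on $\dim E$ via the mass distribution principle, by constructing a probability measure $\mu$ supported on $E$ and controlling $\mu(B(x,r))$ for small balls. The measure is built recursively along the tree of nested sets. Start by distributing unit mass on $E_1$ in proportion to Lebesgue measure (or just concentrated on a single element of $\mathcal{E}_1$, since a lower bound on $\dim$ of a subset of $E$ suffices). Inductively, if $F\in\mathcal{E}_l$ has been assigned mass $\mu(F)$, define the mass of each child $F'\in\mathcal{E}_{l+1}$ with $F'\subset F$ by
\[
\mu(F') \;=\; \mu(F)\,\frac{\area(F')}{\area(F\cap E_{l+1})}.
\]
The children of $F$ partition $F\cap E_{l+1}$, so this is consistent, and the standard projective limit argument yields a Borel probability measure $\mu$ carried on $E$.

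Using the density hypothesis $\area(F\cap E_{l+1})\geq \Delta_l\,\area(F)$, a straightforward induction gives
\[
\mu(F)\;\leq\; \frac{C\,\area(F)}{\prod_{j=1}^{l-1}\Delta_j}
\qquad\text{for every }F\in\mathcal{E}_l,
\]
with $C$ depending only on the initial distribution. Now given $r>0$, choose $l$ so that $d_l\leq r<d_{l-1}$. Any $F\in\mathcal{E}_l$ meeting $B(x,r)$ has $\diam F\leq d_l\leq r$, hence lies in $B(x,2r)$; since the elements of $\mathcal{E}_l$ are pairwise disjoint, the total area of such $F$'s is at most $\area B(x,2r)=c_n r^n$. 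Combining with the inductive mass bound,
\[
\mu(B(x,r)) \;\leq\; \sum_{F\cap B(x,r)\neq\emptyset}\mu(F)
\;\leq\; \frac{C'\,r^n}{\prod_{j=1}^{l-1}\Delta_j}.
\]

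To extract the dimension statement, fix $\varepsilon>0$ and set
\[
s\;=\;n-\varepsilon-\limsup_{l\to\infty}\frac{\sum_{j=1}^{l-1}|\log\Delta_j|}{|\log d_l|}.
\]
For all sufficiently large $l$ we then have $\sum_{j=1}^{l-1}|\log\Delta_j|\leq (n-s)|\log d_l|$, i.e.\ $\prod_{j=1}^{l-1}\Delta_j\geq d_l^{\,n-s}$ (recall $\log d_l<0$). Substituting into the previous display and using $d_l\leq r$ yields $\mu(B(x,r))\leq C''\,r^s$ for all small enough $r$, so the mass distribution principle (Frostman's lemma) gives $\dim E\geq s$. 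Letting $\varepsilon\to 0$ delivers the stated inequality. The only genuinely delicate point is the inductive mass estimate together with the matching of the scale $r$ to the correct level $l$; the remainder is routine measure-theoretic bookkeeping.
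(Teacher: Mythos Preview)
The paper does not prove this lemma; it simply cites McMullen~\cite[Proposition~2.2]{McMullen87}.  Your outline is essentially McMullen's mass--distribution argument, and the inductive estimate
\[
\mu(F)\;\leq\;\frac{C\,\area(F)}{\prod_{j=1}^{l-1}\Delta_j}\qquad(F\in\mathcal{E}_l)
\]
together with the covering step (elements of $\mathcal{E}_l$ meeting $B(x,r)$ all sit in $B(x,2r)$) are both correct.

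There is, however, a genuine slip in the last step.  From $\prod_{j=1}^{l-1}\Delta_j\geq d_l^{\,n-s}$ and $d_l\leq r$ you cannot conclude $\mu(B(x,r))\leq C''r^s$: the inequality $d_l\leq r$ gives $d_l^{\,n-s}\leq r^{\,n-s}$, so $r^n/\prod\Delta_j\leq r^n/d_l^{\,n-s}$ is bounded \emph{below}, not above, by $r^s$.  The remedy is to use the other half of the scale selection, namely $r<d_{l-1}$.  Define $s=n-\varepsilon-\limsup_m\sum_{j=1}^{m}|\log\Delta_j|/|\log d_m|$ (the indexing in the statement).  Then for large $m$ one has $\prod_{j=1}^{m}\Delta_j\geq d_m^{\,n-s}$; applying this with $m=l-1$ and using $r<d_{l-1}$ yields
\[
\mu(B(x,r))\;\leq\;\frac{C'\,r^n}{\prod_{j=1}^{l-1}\Delta_j}\;\leq\;\frac{C'\,r^n}{d_{l-1}^{\,n-s}}\;\leq\;C'\,r^s,
\]
and now Frostman's lemma finishes the proof as you intended.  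With this correction (and the matching re-indexing in your definition of $s$), the argument is complete and coincides with McMullen's.
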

The following result is due to Zheng~\cite[Corollary~5]{Zheng06}.
\begin{lemma} \label{lemmars}
Let $f$ be an entire function 
satisfying~\eqref{regular}.
Then the Fatou set of $f$ has no multiply connected components.
\end{lemma}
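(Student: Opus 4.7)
The plan is to derive the lemma from Zheng's quantitative analysis of multiply connected Fatou components in~\cite{Zheng06}, using proof by contradiction. Suppose that $f$ has a multiply connected Fatou component $U$. By the classical structure theory going back to Baker, $U$ is bounded and wandering, $f^n\to\infty$ locally uniformly on $U$, and for each sufficiently large $n$ the Fatou component $U_n$ containing $f^n(U)$ is bounded, surrounds the origin, and contains a round annulus $\{z:s_n\le|z|\le S_n\}$ with $S_n/s_n\to\infty$.

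The key analytic input, supplied by Zheng, is the following growth dichotomy: the existence of such nested wandering annuli surrounding~$0$ forces the maximum modulus of $f$ to satisfy
\[
\frac{\log M(\lambda r_n,f)}{\log M(r_n,f)}\to\infty \qquad\text{as }n\to\infty
\]
along some sequence $r_n\to\infty$, for some $\lambda>1$ (and hence, using an iteration of the type leading to~\eqref{1a}, for every $\lambda>1$). The mechanism is an application of the maximum modulus principle on the bounded hole $H_n$ of $U_n$ that contains~$0$---noting that $f(\partial H_n)\subset\partial U_{n+1}$, so that $|f|$ on $H_n$ is controlled by its values on $\partial H_n$---together with a comparison of $|f|$ across the large annulus inside $U_n$, which yields the displayed explosion.

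Taking $\lambda=C$ and comparing with the right-hand inequality in~\eqref{regular}, namely $\log M(Cr,f)\le B\log M(r,f)$ for all sufficiently large~$r$, produces an immediate contradiction, which proves the lemma.

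The main obstacle is of course the growth explosion itself. Rather than reproducing Zheng's delicate estimates---which require a careful analysis of the distance to infinity of successive wandering components and of the topology of their holes---I would invoke~\cite[Corollary~5]{Zheng06} directly. The hypothesis of Zheng's corollary is precisely the kind of regularity bound on $\log M(r,f)$ whose violation his explosion statement asserts, and it is plainly implied by~\eqref{regular}.
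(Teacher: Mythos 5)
The paper gives no proof of this lemma: it simply cites Zheng's Corollary~5, and then remarks explicitly that only the \emph{left} inequality of~\eqref{regular}, namely $A\log M(r,f)\leq\log M(Cr,f)$, is needed (equivalently, the existence of $d>1$ with $\log M(2r,f)\geq d\log M(r,f)$ for all large~$r$). Your proposal cites the same Corollary~5, which is the right source, so the appeal to Zheng is fine; but the heuristic mechanism you describe, and the inequality you claim is contradicted, are both backward.

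You assert that a multiply connected wandering component forces an explosion $\log M(\lambda r_n,f)/\log M(r_n,f)\to\infty$, which would conflict with the \emph{right} inequality $\log M(Cr,f)\leq B\log M(r,f)$. What actually happens is the opposite. Precisely the maximum-principle argument you sketch on the hole $H_n$ (together with the fact that the entire circle $|z|=r$, for $r$ ranging over the wide interval between the hole and the outer boundary of $U_n$, maps into the bounded region enclosed by $U_{n+1}$) shows that $\log M(r,f)$ grows \emph{slowly}, roughly like $N_n\log r$ for a fixed $N_n$, across the long ranges of radii associated with the nested wandering annuli. Hence $\log M(2r,f)/\log M(r,f)\to 1$ along a sequence $r\to\infty$, which violates the \emph{lower} bound, not any upper bound. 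Baker's classical infinite-product examples of multiply connected wandering domains exhibit exactly this slow, piecewise polynomial growth of $\log M$; they cause no problem at all for an upper bound of the form $\log M(Cr,f)\leq B\log M(r,f)$. So while the citation is correct and by itself carries the lemma, the explanatory paragraph in your proposal argues toward a false dichotomy and identifies the wrong half of~\eqref{regular} as the operative hypothesis.
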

Actually Zheng requires only that the left inequality of~\eqref{regular}
holds. More precisely, he assumes that there exists $d>1$ such that
$\log M(2r,f)\geq d \log M(r,f)$ for all large~$r$,
but we may replace $M(2r,f)$ by $M(Cr,f)$ here if $C>1$.

\section{Proof of Theorem~\ref{theorem2}}\label{proof2}
Let $f$ be an entire function satisfying~\eqref{regular}
and let $\alpha_1,\alpha_2,\eta$ be as in Theorem~\ref{theorem1}.
We apply Lemma~\ref{ahlfors} to the domains 
\[
D_\nu=\{z
\in\C
: |\re z|< 1, |\im z -8\pi \nu|< 3\pi\},
\quad\text{for}\ \nu\in\{1,2,3\}.
\]
Let $\mu$ be such that the conclusion of Lemma~\ref{ahlfors} is
satisfied for these domains and let
$\lambda=2L\mu/\alpha_1$ where $L$ is the constant from~\eqref{1d}.
We will apply Theorem~\ref{theorem1} with this value of $\lambda$
and with $q=8$.

For large $R$ we put $t=t(R)=2 \mu R/(\alpha_1\log M(R,f))$.
Note that~\eqref{condition3} implies in particular that 
$f$ has no zeros in $D(z,\lambda|z|/\log M(|z|,f))$
if $z\in T(f,\alpha_1,\alpha_2,q,\lambda)$.
Since 
\[
\frac{\lambda |z|}{\log M(|z|,f)}
\geq
\frac{\lambda R}{\log M(2R,f)}
\geq
\frac{\lambda R}{L\log M(R,f)}
=
t(R)
\]
for $z\in A(R)$ by~\eqref{1d} this implies that a branch of
$\log f$ can be defined 
in the disk $D(z,t(R))$, provided 
$z\in T(f,\alpha_1,\alpha_2,q,\lambda)\cap A(R)$.
\begin{lemma}\label{applyahlfors}
Let $a\in T(f,\alpha_1,\alpha_2,q,\lambda)\cap A(R)$. 
If $R$ is sufficiently large, then 
$D(a,t(R))$ contains a subdomain $U$ 
such that 
$\log f$  maps 
$U$ bijectively onto
one of the domains
\[
\begin{aligned}
\Omega_\nu(a)
&=\log f(a)+D_\nu\\
&=\{z
\in\C
: |\re (z-\log f(a))|<1,|\im (z-\log f(a))-8\pi \nu|<3\pi\}.
\end{aligned}
\]
Moreover, there exist 
$\beta,\gamma>0$
such that 
if $V$ is the subset of $U$ which is mapped onto 
\[
Q_\nu(a)=\{z
\in\C
: 
0\leq \re (z-\log f(a))\leq \log 2, |\im (z-\log f(a))-8\pi \nu|\leq 2\pi\},
\]
then 
$\area V\geq \beta\, t(R)^2$ and
\begin{equation} \label{gamma}
\left|\frac{f'(z)}{f(z)}\right|\geq \frac{\gamma}{t(R)}
\quad\text{for}\ z\in V.
\end{equation}
\end{lemma}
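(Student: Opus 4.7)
The plan is to apply the Ahlfors islands theorem (Lemma~\ref{ahlfors}) to the shifted logarithm $g(z):=\log f(z)-\log f(a)$ on the disk $D(a,t(R))$, and then to extract the area and derivative estimates by combining condition~\eqref{condition3} with a Koebe-type distortion argument. A single-valued branch of $\log f$ exists on $D(a,t(R))$ by the remark preceding the lemma: the choice $\lambda=2L\mu/\alpha_1$, together with~\eqref{1d}, gives $t(R)\le \lambda|a|/\log M(|a|,f)$, so~\eqref{condition3} forbids zeros of $f$ in $D(a,t(R))$.

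Since $g(a)=0$, the hypothesis of Lemma~\ref{ahlfors} reduces to $|g'(a)|\ge \mu/t(R)$, and this is immediate from the left inequality of~\eqref{condition1}:
\[
|g'(a)|=\left|\frac{f'(a)}{f(a)}\right|\ge \frac{\alpha_1\log M(|a|,f)}{|a|}\ge \frac{\alpha_1\log M(R,f)}{2R}=\frac{\mu}{t(R)}.
\]
Ahlfors then yields a subdomain $U\subset D(a,t(R))$ that $g$ maps bijectively onto some $D_\nu$, so that $\log f$ maps $U$ bijectively onto $\Omega_\nu(a)$. Write $\phi:=(g|_U)^{-1}\colon D_\nu\to U$. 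Combining~\eqref{condition3} with~\eqref{1d}, one obtains a uniform upper estimate $|g'(\zeta)|\le C'/t(R)$ on $D(a,t(R))$, for a constant $C'$ depending only on $\alpha_1,\alpha_2,\mu,L$. Inverting yields $|\phi'(w)|\ge t(R)/C'$ for every $w\in D_\nu$, and with $\tilde Q_\nu:=Q_\nu(a)-\log f(a)$ this gives
\[
\area V=\int_{\tilde Q_\nu}|\phi'(w)|^2\,dA(w)\ge \frac{\area \tilde Q_\nu}{(C')^2}\,t(R)^2,
\]
establishing the area bound with $\beta=\area(\tilde Q_\nu)/(C')^2$.

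For the lower bound~\eqref{gamma} I would invoke the Koebe one-quarter theorem. Since $\delta_0:=\dist(\tilde Q_\nu,\partial D_\nu)$ is a fixed positive geometric constant, for any $w\in \tilde Q_\nu$ one has $\phi(D(w,\delta_0))\supset D(\phi(w),\delta_0|\phi'(w)|/4)$, and the inclusion $\phi(D(w,\delta_0))\subset U\subset D(a,t(R))$ forces $|\phi'(w)|\le 8t(R)/\delta_0$. Hence $|f'(z)/f(z)|=|g'(z)|=1/|\phi'(g(z))|\ge \delta_0/(8t(R))$ for $z\in V$, which is~\eqref{gamma} with $\gamma=\delta_0/8$.

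The main obstacle—and the reason condition~\eqref{condition3} was built into Theorem~\ref{theorem1} via the parameter $\lambda$—is the need for a uniform upper bound on $|g'|$ throughout the whole disk $D(a,t(R))$, not just at its center: this is what allows one to invert to a positive lower bound for $|\phi'|$ over all of $D_\nu$ and thereby obtain an island $U$ of positive density in $D(a,t(R))$. Everything else is routine tracking of constants originating from~\eqref{1d}, \eqref{condition1} and the two standard conformal distortion estimates.
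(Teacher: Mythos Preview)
Your argument is correct and follows the paper's route almost verbatim: the same function $g(z)=\log f(z)-\log f(a)$, the same verification of the Ahlfors hypothesis via~\eqref{condition1}, and the same use of~\eqref{condition3} (together with~\eqref{1d}) to obtain the uniform upper bound $|g'|\le C'/t(R)$ on $D(a,t(R))$, which inverts to $|\phi'|\ge t(R)/C'$ and gives the area estimate. The one genuine difference is in deriving~\eqref{gamma}. The paper invokes the compactness distortion estimate (Lemma~\ref{koebe1}) on the fixed pair $Q_\nu\subset D_\nu$ to get $\sup_{V}|g'|\le C\inf_{V}|g'|$, and then feeds the already-established area bound $\area V\le \pi t(R)^2$ back in to extract a lower bound for $\inf_V|g'|$. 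Your route via the Koebe one-quarter theorem is more direct: since $\phi(D(w,\delta_0))\subset D(a,t(R))$ for $w\in\tilde Q_\nu$, the image disk of radius $\delta_0|\phi'(w)|/4$ must fit inside a disk of radius $t(R)$, forcing $|\phi'(w)|\le 4t(R)/\delta_0$ and hence $|g'|\ge \delta_0/(4t(R))$ on $V$. This avoids the detour through the area estimate and does not require Lemma~\ref{koebe1}, so it is a slightly cleaner way to get the derivative bound; the paper's approach has the mild advantage that the same distortion lemma is reused later in Lemma~\ref{densdiam}.
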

\begin{proof}
Let $h:D(a,t)\to\C$, $h(z)=\log f(z)-\log f(a)$.
Then $h(a)=0$ and thus
\[
\frac{|h'(a)|}{1+|h(a)|^2}=|h'(a)|=\frac{|f'(a)|}{|f(a)|}
\geq  \alpha_1\frac{\log M(|z|,f)}{|z|}
\geq \frac{\alpha_1}{2}\frac{\log M(R,f)}{R}
=\frac{\mu}{t}
\]
Lemma~\ref{ahlfors} implies that there exists a subdomain $U$ of 
$D(a,t)$ which is mapped by $h$ bijectively onto one of the three domain $D_\nu$ 
occuring in this lemma. It follows that $\log f$ maps $U$ 
bijectively onto one of the domains $\log f(a)+D_\nu$.

We have 
\[
\begin{aligned}
4\pi\log 2
&
=\area Q_\nu 
\\ &
=\area h(V)
\\ &
\leq\sup_{z\in V}|h'(z)|^2 \area V
\\ &
\leq\sup_{z\in V}\left(\alpha_2 \frac{\log M(|z|,f)}{|z|}\right)^2 \area V
\\ &
\leq\left(\alpha_2 L\frac{\log M(R,f)}{R}\right)^2 \area V
\\ &
=\left(\frac{2\mu\alpha_2 L}{\alpha_1 t}\right)^2 \area V
\end{aligned}
\]
With $\beta=(\pi\alpha_1^2 \log 2)/(\mu\alpha_2 L)^2$ we thus have
$\area V\geq \beta\, t^2$.

By Lemma~\ref{koebe1}, there exists a constant $C>1$ such if $\phi$ denotes 
the branch of the inverse of $h$ which maps $D_\nu$ to~$U$, then
$|\phi'(\zeta)|\leq C|\phi'(z)|$ for $z,\zeta\in Q_\nu$. 
It follows 
that $|h'(\zeta)|\leq C|h'(z)|$ for $z,\zeta\in V$. 
Thus
\[
4\pi\log 2\leq \sup_{z\in V}|h'(z)|^2 \area V \leq C^2 \inf_{z\in V}|h'(z)|^2 \area V
\leq C^2\pi t^2 \inf_{z\in V}|h'(z)|^2 
\]
so that
\[
\inf_{z\in V}|h'(z)|\geq \frac{2 \sqrt{\log 2}}{Ct}.
\]
Since $h'=f'/f$ we see that~\eqref{gamma}
follows with $\gamma=2 \sqrt{\log 2}/C$.
\end{proof}
If $U$ is as in
Lemma~\ref{applyahlfors}, then $f$ maps 
$U$ onto the annulus 
\[
A'=\{z
\in\C
:e^{-1} |f(a)|  <|z|<e |f(a)| \}
\]
and $V$ onto 
$A(|f(a)|)$. 
If $D(b,r)$ is a disk contained in $A(|f(a)|)$, then 
there is a branch of the logarithm mapping $D(b,r)$ into~$Q_\nu(a)$
and thus a branch $\theta$ of the inverse of $f$ mapping 
$D(b,r)$ into~$V$.
Since $D(b,2r)\subset A'$, the branch of the logarithm extends to a map from
$D(b,2r)$ into $\Omega_\nu(a)$ and $\theta$ extends to a map from
$D(b,2r)$ into~$U$.

We now show that there are comparatively many disks disjoint $D(a,t)$ 
to which Lemma~\ref{applyahlfors}  can be applied.
\begin{lemma}\label{disks}
Let $\eta$ be as in Theorem~\ref{theorem1}.
For sufficiently large $R$ there exist a positive integer
 $m(R)$ satisfying
\[
m(R)\geq \frac{\eta}{2}\left(\frac{R}{t(R)}\right)^2
\]
such that there are
$m(R)$ points 
\[
a_j=a_j(R)
\in T(f,\alpha_1,\alpha_2,q,\lambda)\cap A(R),
\quad j=1,\dots,m(R),
\]
satisfying
$D(a_j,t(R))\subset A(R)$ for all $j$ and
$D(a_j,t(R))\cap D(a_k,t(R))=\emptyset$ for $j\neq k$. 
\end{lemma}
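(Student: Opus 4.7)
The plan is a standard greedy packing argument, so I do not expect any serious obstacle---only a little routine bookkeeping near the boundary of $A(R)$. Throughout I shall write $T=T(f,\alpha_1,\alpha_2,q,\lambda)$.

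First I would pass from $A(R)$ to the slightly thinner sub-annulus $A'(R)=\{z:R+t(R)\leq|z|\leq 2R-t(R)\}$, chosen so that $D(z,t(R))\subset A(R)$ for every $z\in A'(R)$. Since $f$ is transcendental, $\log M(R,f)\to\infty$, so
\[
t(R)=\frac{2\mu R}{\alpha_1\log M(R,f)}=o(R)\quad\text{as }R\to\infty,
\]
and hence $\area(A(R)\setminus A'(R))\leq 6\pi R\,t(R)=o(R^2)$. Combining this with Theorem~\ref{theorem1}, which guarantees $\area(T\cap A(R))\geq 3\pi\eta R^2$, I get
\[
\area(T\cap A'(R))\geq 3\pi\eta R^2-6\pi R\,t(R)
\]
for all large $R$.

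Next I would choose a maximal family $\{D(a_j,t(R))\}_{j=1}^{m(R)}$ of pairwise disjoint disks whose centres $a_j$ lie in $T\cap A'(R)$; the constraint $a_j\in A'(R)$ automatically forces $D(a_j,t(R))\subset A(R)$, so the first two conditions in the lemma are satisfied for free. Maximality then yields the standard covering estimate: for any $z\in T\cap A'(R)$ some $a_j$ must satisfy $|z-a_j|<2t(R)$, since otherwise $D(z,t(R))$ could be adjoined to the family. Consequently $T\cap A'(R)\subset\bigcup_{j=1}^{m(R)}D(a_j,2t(R))$, and hence
\[
4\pi t(R)^2\,m(R)\geq \area(T\cap A'(R))\geq 3\pi\eta R^2-6\pi R\,t(R).
\]

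To finish I would divide through by $4\pi t(R)^2$ and use $R/t(R)\to\infty$ to obtain
\[
m(R)\geq\frac{3\eta}{4}\left(\frac{R}{t(R)}\right)^{\!2}-\frac{3R}{2t(R)}\geq\frac{\eta}{2}\left(\frac{R}{t(R)}\right)^{\!2}
\]
for all sufficiently large $R$, which is the required bound. The only point that needs checking is that $R/t(R)\to\infty$, which is just the transcendence of $f$ via $\log M(R,f)\to\infty$; after that the conclusion is a one-line arithmetic check and there is no genuine obstacle in the argument.
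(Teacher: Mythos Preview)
Your proof is correct and follows essentially the same greedy packing argument as the paper: the paper takes a maximal family of centres in $T\cap A(R)$ satisfying both conditions and observes that $T\cap A(R)$ is covered by the doubled disks together with the strip $\{z\in A(R):\dist(z,\partial A(R))\leq t(R)\}$, while you first excise that strip to form $A'(R)$ and then run the same maximality argument there. The arithmetic differs only in the constant in front of the boundary term ($6\pi R\,t(R)$ versus $8\pi R\,t(R)$), and both lead immediately to the bound $m(R)\geq\tfrac{\eta}{2}(R/t(R))^2$ once one uses $t(R)=o(R)$.
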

\begin{proof}
Let $m$ be the maximum number of points 
$a_1,\dots,a_m\in T(f,\alpha_1,\alpha_2,q,\lambda)\cap A(R)$
which satisfy the conclusion of the lemma. Then
\[
T(f,\alpha_1,\alpha_2,q,\lambda)\cap A(R) \subset 
\bigcup_{k=1}^m D(a_k,2 t(R))\cup \left\{z\in A(R):\dist(z,\partial A(R))
\leq t(R)\right\},
\]
since a point contained in the left but not in the right side could be 
added to the collection $a_1,\dots,a_m$.
It follows that
\[
\area 
\left(
T(f,\alpha_1,\alpha_2,q,\lambda)\cap A(R) 
\right)
\leq 4\pi m\,t(R)^2 + 8\pi R \,t(R).
\]
Theorem~\ref{theorem1} says that $\area T(f,\alpha_1,\alpha_2,q,\lambda)\cap A(R) \geq 3\pi\eta R^2$ and 
thus we obtain
\[
m\geq \frac{3\pi\eta R^2-8\pi R \,t(R)}{4\pi\, t(R)^2}\geq \frac{\eta}{2}\left(\frac{R}{t(R)}\right)^2
\]
for large~$R$.
\end{proof}
For $m=m(R)$ 
and $a_1(R),\dots,a_{m}(R)$ as in Lemma~\ref{disks}
we choose for each disk $D(a_j(R),t(R))$ a subset 
$V$ as in Lemma~\ref{applyahlfors}.
We denote these sets 
by $V_1(R),\dots,V_m(R)$.

It follows that
\begin{equation}\label{areaVj}
\area\left(\bigcup_{j=1}^{m(R)}
V_j(R)\right)\geq m \beta\, t(R)^2\geq \frac{\eta}{2}\beta R^2.
\end{equation}
Thus $\bigcup_{j=1}^{m(R)} V_j(R)$ has a positive density in $A(R)$.

We now construct the sets ${\mathcal E}_l$ to which Lemma~\ref{lemmamcm} will be applied.
We choose $R_0$ large and put 
\[
{\mathcal E}_0=\left\{A(R_0)\right\}
\quad\text{and}\quad
{\mathcal E}_1=\left\{V_j(R_0):1\leq j\leq m(R_0) \right\}.
\]
We shall define the sets ${\mathcal E}_l$ inductively such that if 
$F\in {\mathcal E}_l$, then $f^l(F)=A(R_{l,F})$ for some $R_{l,F}\geq R_0$. 
Moreover, 
$f^{l-1}: F\to f^{l-1}(F) $ is bijective and
if $G\in {\mathcal E}_{l-1}$ such that $F\subset G$, then 
\[
f^{l-1}(F)=V_j(R_{l-1,G})\subset D(a_j(R_{l-1,G}),t(R_{l-1,G}))
\]
for some $j\in\{1,\dots,m(R_{l-1,G})\}$.
To simplify notation, we will write $a_j$ instead of 
$a_j(R_{l-1,G})$ in the sequel.

Suppose now that ${\mathcal E}_l$ has been defined and let $F\in {\mathcal E}_l$
and $G\in {\mathcal E}_{l-1}$  be as above. 
By Lemma~\ref{applyahlfors} the disk $D(a_j,t(R_{l-1,G}))$
has a subdomain $U$ which is mapped by $\log f$ bijectively onto 
$\Omega_\nu(a_j)$ for some $\nu\in\{1,2,3\}$, with
$f^{l-1}(F)=V_j(R_{l-1,G})$  being the subset that is mapped onto
$Q_\nu(a_j)$.
Thus $\log f^l:F\to Q_\nu(a_j)$ is bijective and its
inverse $\psi:Q_\nu(a_j)\to F$ extends  to 
$\Omega_\nu(a_j)$.
For $1\leq k\leq m(R_{l,F})$ we can choose 
a domain $W_k\subset Q_\nu(a_j)$ such that $\exp W_k= V_k(R_{l,F})$.
We now put 
\[
{\mathcal E}_{l+1}(F)=\left\{\psi(W_k):1\leq k\leq m(R_{l,F})\right\}.
\]
Finally we set
\[
{\mathcal E}_{l+1}=\bigcup_{F\in {\mathcal E}_l} {\mathcal E}_{l+1}(F).
\]
Then the sequence $({\mathcal E}_l)$ has the desired properties.
Again we denote by $E_l$ the union of all elements of ${\mathcal E}_l$.
\begin{lemma} \label{densdiam}
There exists  $\Delta>0$ such that
if $F\in {\mathcal E}_l$, then
\begin{equation}\label{dens}
\dens\left(E_{l+1},F\right)\geq\Delta
\end{equation}
for all~$l$. Moreover,
\begin{equation}\label{diam}
\diam F \leq \exp\left(-l^2\right)
\end{equation}
for large~$l$.
\end{lemma}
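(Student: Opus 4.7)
To prove Lemma~\ref{densdiam} I would treat (\ref{dens}) and (\ref{diam}) separately: the density bound by a three-step area comparison, and the diameter bound by an induction whose contraction accelerates doubly exponentially and easily beats $\exp(-l^2)$.

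For (\ref{dens}), fix $F\in\mathcal{E}_l$ with parent $G\in\mathcal{E}_{l-1}$, and recall that $\psi=(\log f^l|_F)^{-1}$ extends to a univalent map on $\Omega_\nu(a_j)$ with $\psi(Q_\nu(a_j))=F$. By Lemma~\ref{disks} and (\ref{areaVj}), $\bigcup_k V_k(R_{l,F})$ has density at least $\eta\beta/(6\pi)$ in $A(R_{l,F})$. Since $\exp:Q_\nu(a_j)\to A(R_{l,F})$ has Jacobian $|e^z|^2\in[R_{l,F}^2,4R_{l,F}^2]$, each single-sheet pull-back $W_k\subset Q_\nu(a_j)$ with $\exp W_k=V_k(R_{l,F})$ satisfies $\area W_k\ge\area V_k(R_{l,F})/(4R_{l,F}^2)$, and so $\dens(\bigcup_k W_k,Q_\nu(a_j))\ge\eta\beta/(32\pi\log 2)$. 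Finally, $Q_\nu(a_j)$ is a translate of a fixed compact subset of the fixed reference rectangle $D_\nu$, so Lemma~\ref{koebe1} supplies a universal constant $C$ with $\sup|\psi'|\le C\inf|\psi'|$ on $Q_\nu(a_j)$; pulling back through $\psi$ loses at most a factor $C^2$ in density, yielding (\ref{dens}) with $\Delta=\eta\beta/(32\pi C^2\log 2)$, uniform in $l$.

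For (\ref{diam}) I would first establish the inductive contraction
\[
\diam F\le\frac{K\,\diam G}{\log M(R_{l-1,G},f)}
\]
for a universal $K>0$. Writing $\psi_G=(\log f^{l-1}|_G)^{-1}:\Omega_{\nu'}(a_{j'})\to\C$ for the analogous univalent extension with $\psi_G(Q_{\nu'}(a_{j'}))=G$, one has $F=\psi_G(W_j)$, where $W_j\subset Q_{\nu'}(a_{j'})$ is a single-sheet preimage under $\exp$ of $V_j(R_{l-1,G})\subset D(a_j,t(R_{l-1,G}))\subset A(R_{l-1,G})$. Since $|\log'(\zeta)|=1/|\zeta|\le 1/R_{l-1,G}$ on $A(R_{l-1,G})$, the convexity of $D(a_j,t(R_{l-1,G}))$ gives $\diam W_j\le 2t(R_{l-1,G})/R_{l-1,G}=4\mu/(\alpha_1\log M(R_{l-1,G},f))$. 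Combining Koebe distortion on $Q_{\nu'}\subset\Omega_{\nu'}$ (giving $\sup|\psi_G'|\le C_0\inf|\psi_G'|$) with the area identity
\[
\area G=\int_{Q_{\nu'}(a_{j'})}|\psi_G'|^2\,dA\ge(\inf|\psi_G'|)^2\,\area Q_{\nu'}
\]
and the isodiametric bound $\area G\le\pi(\diam G)^2/4$ produces $\sup_{Q_{\nu'}}|\psi_G'|\le C_1\,\diam G$ for a universal $C_1$. Hence $\diam F\le\sup_{W_j}|\psi_G'|\cdot\diam W_j$ gives the claim with $K=4C_1\mu/\alpha_1$.

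Iterating along a nested chain $F_0\supset F_1\supset\cdots\supset F_l$ with $R_k:=R_{k,F_k}$ yields $\diam F_l\le 4R_0 K^l\prod_{k=0}^{l-1}1/\log M(R_k,f)$. Applying (\ref{condition2}) to $a_{j_{k+1}}\in T(f,\alpha_1,\alpha_2,q,\lambda)\cap A(R_k)$ with $q=8$ gives $R_{k+1}=|f(a_{j_{k+1}})|\ge R_k^q$, hence $R_k\ge R_0^{q^k}$. Section~\ref{discreg} shows $\lambda(f)>0$, so $\log M(r,f)\ge r^{\lambda(f)/2}$ for large $r$, and $\log\log M(R_k,f)\ge(q^k\lambda(f)/2)\log R_0$. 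Consequently $\sum_{k=0}^{l-1}\log\log M(R_k,f)$ grows like $q^l\log R_0$, which dominates the $O(l)$ contribution from $K^l$; taking $R_0$ large enough therefore forces $\diam F_l\le\exp(-l^2)$ for all sufficiently large $l$. The main obstacle is the Koebe-type bound $\sup_{Q_{\nu'}}|\psi_G'|\le C_1\,\diam G$: a direct appeal to the $\tfrac14$-theorem would only bound $|\psi_G'|$ by $\diam\psi_G(\Omega_{\nu'})$, which strictly exceeds $\diam G$, so the area detour above is essential.
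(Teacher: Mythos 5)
Your density argument mirrors the paper's almost exactly: both pull $\bigcup_k V_k(R_{l,F})$ back through $\exp$ to $Q_\nu$, losing a factor $4$ from the Jacobian of $\exp$, then pass through $\psi$ using Lemma~\ref{koebe1} with a uniform distortion constant, arriving at the same $\Delta$. Your diameter argument, however, follows a genuinely different route. The paper works directly with $\phi=(f^{l-1}|_F)^{-1}$ in the $z$-plane: it applies Koebe (Lemma~\ref{koebe2}) once to replace $\sup_{D(a_j,t)}|\phi'|$ by $12|\phi'(a_j)|$, then expands $(f^{l-1})'$ by the chain rule and inserts the pointwise lower bound~\eqref{gamma} at each $f^k(z)\in V_k(R_k)$ along the orbit, which telescopes to $\prod_k\delta\log M(R_k,f)$. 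You instead set up a one-step contraction $\diam F\le K\diam G/\log M(R_{l-1,G},f)$ in log-coordinates by comparing $\area G=\int_{Q_{\nu'}}|\psi_G'|^2$ with the isodiametric bound $\area G\le\pi(\diam G)^2/4$, which is a nice way to get $\sup_{Q_{\nu'}}|\psi_G'|\lesssim\diam G$ without the chain rule or~\eqref{gamma}; iterating reproduces the same product $\prod_k 1/\log M(R_k,f)$ up to a $K^l$ factor. Your final estimate invokes the lower order $\lambda(f)>0$ to get $\log M(R_k,f)\ge R_0^{q^k\lambda/2}$, which is far stronger than needed; the paper uses only the trivial $\log M(r,f)\gtrsim\log r$ to get $\delta\log M(R_k,f)\ge q^k$, which already beats $K^l$ and gives $\exp(-l^2)$ since $q=8>e^2$. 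Two small things you should patch: (i) the base case $l=1$ cannot use $\psi_G$ since $G=A(R_0)$ is not of the form $\psi_G(Q_{\nu'})$, though here $\diam F_1\le 2t(R_0)$ settles it directly; (ii) to justify $\diam\psi_G(W_j)\le\sup_{Q_{\nu'}}|\psi_G'|\cdot\diam W_j$ you should note that $Q_{\nu'}$ is convex (a rectangle), so straight segments between points of $W_j$ stay in $Q_{\nu'}$ where $\psi_G'$ is controlled. With these fixes the argument is sound.
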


\begin{proof}
Let $F\in {\mathcal E}_l$ and let $\psi$ and $W_k$ be as above.
Since $W_k=\log V_k(R_{l,F})$ for some branch of the
logarithm we have
\[
\area W_k =\int_{V_k(R_{l,F})}\frac{1}{|z|^2} \,dx\,dy
\geq \frac{1}{4R_{l,F}^2}\area V_k(R_{l,F})
\]
so that~\eqref{areaVj} yields
\[
\area\left(\bigcup_{k=1}^{m(R_{l,F})} W_k\right)
\geq \frac{\eta\beta}{8}.
\]
Applying Lemma~\ref{koebe1} we see that there exists a constant
$C$ such that $|\psi'(\zeta)|\leq C|\psi'(z)|$ for $z,\zeta\in Q_\nu(a_j)$.
(As the domains $\Omega_\nu(a_j)$  and the compact 
subsets $Q_\nu(a_j)$  are translates of fixed sets,
the constant $C$ does not depend on~$a_j$ or~$\nu$.)
Thus 
\[
\begin{aligned}
\dens(E_{l+1},F)
&=
\dens\left(\bigcup_{k=1}^{m(R_{l,F})} \psi(W_k), \psi(Q_\nu(a_j))\right)
\\ &
\geq 
\frac{1}{C^2} \dens\left(\bigcup_{k=1}^{m(R_{l,F})} W_k, Q_\nu(a_j)\right)
\\ &
\geq  \frac{\eta\beta}{32C^2\pi\log 2}.
\end{aligned}
\]
Thus~\eqref{dens} holds with $\Delta=\eta/(32C^2\pi\log 2)$.

To prove~\eqref{diam} let
$F_k\in {\mathcal E}_k$ such that $F\subset F_k$, for $1\leq k<l$.
(With $G$ as before we thus have $G=F_{l-1}$.) 
With the abbreviation $R_k=R_{k,F_k}$ we have
$f^k(F_k)=A(R_{k})$.
It follows from the construction and~\eqref{condition2} that $R_{k+1}\geq R_k^q$
and thus $R_k\geq (R_0)^{q^k}$.

As before we have 
\[
f^{l-1}(F)=V_j(R_{l-1})\subset D(a_j, t(R_{l-1}))
\subset A(R_{l-1})
\]
for some $j\in\{1,\dots,m(R_{l-1})\}$. Let 
$\phi$ be the branch of the inverse of $f^{l-1}$ which
maps $f^{l-1}(F)$ to~$F$.
Noting that $\phi$ is univalent in $D(a_j,2 t(R_{l-1}))$ 
we deduce from
Koebe's distortion theorem 
(i.e., Lemma~\ref{koebe2}) that if $z\in D(a_j, t(R_{l-1}))$,
then $|\phi'(z)| \leq 12 |\phi'(a_j)|$.
We conclude that
\[
\diam F\leq 12 \left|\phi'(a_j)\right| \diam f^{l-1}(F)\leq 24 \left|\phi'(a_j)\right| t(R_{l-1}).
\]

Now
$(f^{l-1})'(z)=\prod_{k=0}^{l-2} f'(f^k(z))$.
Since $f^k(z)\in A(R_k)$ it follows from~\eqref{gamma} that
\[
\left|f'(f^k(z))\right|\geq 
\gamma \frac{\left|f^{k+1}(z)\right|}{t(R_k)}\geq \gamma \frac{R_{k+1}}{t(R_k)}
=\delta \frac{R_{k+1}}{R_k}\log M(R_k,f)
\]
where $\delta = \gamma\alpha_1/(2\mu)$.
We conclude that 
\[
\left|(f^{l-1})'(z)\right|\geq \frac{R_{l-1}}{R_0}\prod_{k=0}^{l-2} \delta \log M(R_k,f).
\]
Thus 
\[
|\phi'(a_j)|\leq  \frac{R_0}{R_{l-1}}\prod_{k=0}^{l-2} \frac{1}{\delta\log M(R_k,f)}
\]
and hence
\[
\diam F \leq 24 \frac{R_0}{R_{l-1}}\prod_{k=0}^{l-2} \frac{1}{\delta\log M(R_k,f)}
t(R_{l-1})
=\tau \prod_{k=0}^{l-1} \frac{1}{\delta\log M(R_k,f)}
\]
with $\tau=48 R_0 \mu\delta/\alpha_1$.
For large $R_0$ we have $\delta\log M(r,f)\geq \log r$ if $r\geq R_0$.
Thus $\delta\log M(R_k,f)\geq \log R_k\geq q^k\log R_0\geq q^k$ if $R_0$ is chosen large enough. 
Hence
\[
\diam F \leq \tau \prod_{k=0}^{l-1} q^{-k}
=\tau\exp\left(-\tfrac{1}{2}(l-1)l\log q\right)
\leq \exp\left(-l^2\right)
\]
for large~$l$, since we have chosen $q=8>e^2$.
\end{proof}
Lemma~\ref{densdiam} says that
we can apply Lemma~\ref{lemmamcm} with 
\[
\Delta_l=\Delta
\quad\text{and}\quad
d_l=
\exp\left(-l^2\right)
.
\]
This yields $\dim E=2$.
Moreover, it follows from the construction that $E\subset I(f)$.
By Lemma~\ref{lemmars} we have $A(R)\cap J(f)\neq\emptyset$ for large~$R$.
This implies that $F\cap J(f)\neq\emptyset$ if $F\in {\mathcal E}_l$ and if $l$ is
sufficiently large. Hence $E\subset J(f)$.
Altogether we thus have $E\subset I(f)\cap J(f)$.
This completes the proof of Theorem~\ref{theorem2}.

\end{document}